\newtheorem{theorem}{Theorem}[section]
\newtheorem{corollary}[theorem]{Corollary}
\newtheorem{remark}[theorem]{Remark}
\newtheorem{proposition}[theorem]{Proposition}
\newtheorem{lemma}[theorem]{Lemma}
\newtheorem{definition}[theorem]{Definition}
\newcommand{\ze}{\zeta}
\newcommand{\PP}{\mathcal{D}_{N}}
\newcommand{\PPtwo}{\mathcal{D}_{2N}}
\newcommand{\PN}{\mathcal{P}_{2N}}
\newcommand{\NN}{\mathcal{N}}
\newcommand{\JJ}{\mathcal{J}}
\newcommand{\al}{\alpha} \newcommand{\be}{\beta}
\newcommand{\ga}{\gamma} \newcommand{\de}{\delta}
\newcommand{\De}{\Delta}
\newcommand{\si}{\sigma} 
\newcommand{\e}{\varepsilon}
\newcommand{\R}{{\mathbb R}}
\newcommand{\N}{{\mathbb N}}
\newcommand{\Z}{{\mathbb Z}}
\newcommand{\Id}{\mathrm{Id}}
\newcommand{\abs}[1]{{\left\lvert#1\right\rvert}}
\newcommand{\norm}[1]{{\left\lVert#1\right\rVert}}
\newcommand{\defeq}{:=}
\newcommand{\defqe}{=:}
\newcommand{\ddx}{\frac{\mathrm{d}}{\mathrm{d}x}}
\renewcommand{\rho}{\varrho}
\renewcommand{\(}{\left(}
\renewcommand{\)}{\right)}
\renewcommand{\d}{\, \mathrm{d}}
\def\sp#1#2{\left\langle#1,#2\right\rangle}
\newcommand{\sign}{\mathrm{sign}}
\newcommand{\Jac}{\mathrm{Jac}}
\newcommand{\G}{{F}}
\newcommand{\PG}{{P_F}}
\newcommand{\E}[1]{{e^h(q_{#1})}}
\newcommand{\Ek}[1]{{e^h(q^k_{#1})}}
\newcommand{\DE}{{\nabla_N e^h}}
\newcommand{\Cbar}{{\Delta \Lbar}}
\newcommand{\Dbar}{{\bar D}}
\newcommand{\gapc}{{\ga_k^\mathrm{pc}}}
\newcommand{\gapl}{{\ga_k^\mathrm{pl}}}
\newcommand{\gapq}{{\ga_k^\mathrm{pq}}}
\newcommand{\gapco}{{\ga^\mathrm{pc}}}
\newcommand{\gaplo}{{\ga^\mathrm{pl}}}
\newcommand{\gapqo}{{\ga^\mathrm{q}}}
\newcommand{\dedx}{\tfrac{\mathrm{d}}{\mathrm{d} x}}
\newcommand{\dezdx}{\tfrac{\mathrm{d}^2}{\mathrm{d} x^2}}
\def\sse{{q+\tfrac12(\e,\De)}}
\def\seg#1#2{\left[#1,#2\right]}
\def\DD#1#2#3{D(#1,#2,#3)}
\def\DDbar#1#2#3{\bar D(#1,#2,#3)}
\def\Lbar{\bar L}
\let\nnu=\nu
\def\nu{{\nnu^i}}
\def\nnuu{{e_k}}
\let\a=\abs
\newenvironment{acknowledgments}{\subsection*{Acknowledgements}
\small}%
\begin{document}

\title[A convergent string method]{A convergent string method:
  Existence and approximation for the Hamiltonian boundary-value
  problem}

\author{Hartmut Schwetlick and Johannes Zimmer}
%\address{
\thanks{
both Department of Mathematical Sciences, University of Bath,
    Bath BA2 7AY, United Kingdom, \texttt{\{schwetlick|zimmer\} at
      maths.bath.ac.uk}}

\date{\today}% It is always \today, today,
             %  but any date may be explicitly specified
\maketitle

\centerline{
Dedicated to Professor Armin Leutbecher on the occasion of his 80th birthday}

\begin{abstract}
  This article studies the existence of long-time solutions to the   Hamiltonian boundary value problem, and their consistent numerical   approximation. Such a boundary value problem is, for example, common   in Molecular Dynamics, where one aims at finding a dynamic   trajectory that joins a given initial state with a final one, with   the evolution being governed by classical (Hamiltonian) dynamics.   The setting considered here is sufficiently general so that long   time transition trajectories connecting two configurations can be   included, provided the total energy $E$ is chosen suitably. In   particular, the formulation presented here can be used to detect   transition paths between two stable basins and thus to prove the   existence of long-time trajectories. The starting point is the   formulation of the equation of motion of classical mechanics in the   framework of Jacobi's principle; a curve shortening procedure   inspired by Birkhoff's method is then developed to find geodesic   solutions.  This approach can be viewed as a string method.
\end{abstract}

\section{Introduction}
\label{sec:introduction}

The aim of this article is to study the existence and give a consistent approximation procedure of the boundary value problem for the conservative dynamical system
\begin{equation}
  \label{eq:hamildyn}
  \frac{d^2 q(t)}{\mathrm{d} t^2} = - \nabla V(q) ,
\end{equation}
where $V$ is a smooth potential on $Q$. We assume that $Q$ is an open subset of $\R^n$ as this is the relevant case for the applications we have in mind; extensions to a more general setting are possible but not discussed here.

For the boundary conditions, we write
\begin{equation}
\label{eq:boundarycond}
  q(0) = q_a \text{ and } q(T_0) = q_b
\end{equation}
with $q_a, q_b \in Q$ and $T_0>0$. Here, $T_0$ is part of the problem and has to be determined (however, the total energy $E$, defined as the sum of kinetic and potential energy, is fixed). The focus on the boundary-value problem is motivated by applications, as discussed below.

\subsection{Hamiltonian systems, rare events and path sampling}
\label{sec:Hamilt-syst-rare}

Equation~\eqref{eq:hamildyn} (furnished with various initial or boundary conditions) can be reformulated as the classic Hamiltonian problem
\begin{align}
  \begin{split}
    \label{eq:H-pq}
    \dot p & = - \frac{\partial H}{\partial q}(p,q), \\
    \dot q & =  \frac{\partial H}{\partial q}(p,q)
  \end{split}
\end{align}
for $p,q\in\R^n$, where $H$ is the \emph{Hamiltonian}
\begin{equation}
  \label{eq:Hamiltonian}
  H = \frac12p^2 + V(q).
\end{equation}

Mathematically, the existence of solutions to~\eqref{eq:H-pq}, often more succinctly written as
\begin{equation}
  \label{eq:H-z}
  \dot z = \begin{pmatrix} 0 & -\Id \\ \Id & 0 \end{pmatrix} H_z(z) \ , 
\end{equation}
with $z\defeq(p,q)$, is a classical problem. Periodic solutions have been a particular focus, and existence results obtained until the early 1980s are discussed in the beautiful survey article~\cite{Rabinowitz:82a}. Already for periodic solutions, a clear distinction has to be made for local results (that is, short time solutions) and global solutions describing solutions in the large. Apparently the first global global result was obtained by Seifert~\cite{Seifert:48a} for a Hamiltonian which is slightly more general than the one in~\eqref{eq:Hamiltonian}. The key idea of his proof is based on differential geometry, using an equivalent reformulation of~\eqref{eq:H-pq} in which solutions can be found as a geodesic in a (degenerate) Riemannian metric, the so-called Jacobi metric. A curve shortening procedure proposed by G.\ D.\ Birkhoff~\cite[Section V.7]{Birkhoff:66a} can then be applied to show the existence of a geodesic. This result has later been extended by Weinstein, and a more general result based on a different variational approach was given by Rabinowitz~\cite{Rabinowitz:78a}.

In the Sciences, the interest in non-periodic long time solutions has recently been rejuvenated by various applications. Namely, complex systems in physics, chemistry or biology can often be described by a potential energy landscape with many wells, separated by barriers. A common problem is then to find a trajectory joining a given initial point (configuration) with a given final point. We study this problem in the situation where the dynamics is determined by~\eqref{eq:hamildyn}, and the points given in~\eqref{eq:boundarycond} are potentially far apart. In particular, the two configurations will generically be located in different wells of the energy landscape. Rare events are an example of these transitions between two wells. Typically, thermally activated reactions have many deep wells separated by large energy barriers. Reactants will then spend most of the time jostling around in one well before a rare spontaneous fluctuation occurs that lifts the atoms of the reactant over the barrier into the next (product) valley. Information on rare events is crucial since they represent important changes in the system, such as chemical reactions or conformational modifications of molecules. A major challenge in Molecular Dynamics (MD) is that these hopping events take place so rarely that the computational limits of MD simulations can be easily exceeded. Since the problem~\eqref{eq:hamildyn}--\eqref{eq:boundarycond} is central in MD, a number of solution strategies have been proposed; see~\cite{Schwetlick:08b} for a brief review of some methods. Further, for practitioners of MD, the question arises whether any numerical approximation shadows a physical one~\cite{Gillilan:92a} (and if so, whether it shadows a generic physical trajectory). The lack of hyperbolicity rules out standard tools to prove shadowing (e.g.,~\cite[Theorem 18.1.3]{Katok:95a}). Thus, for MD, computations are ``based on trust''~\cite[Section 4.3]{Frenkel:02a}.

\begin{figure}
  \centering
  \includegraphics[angle=90,height=6cm]{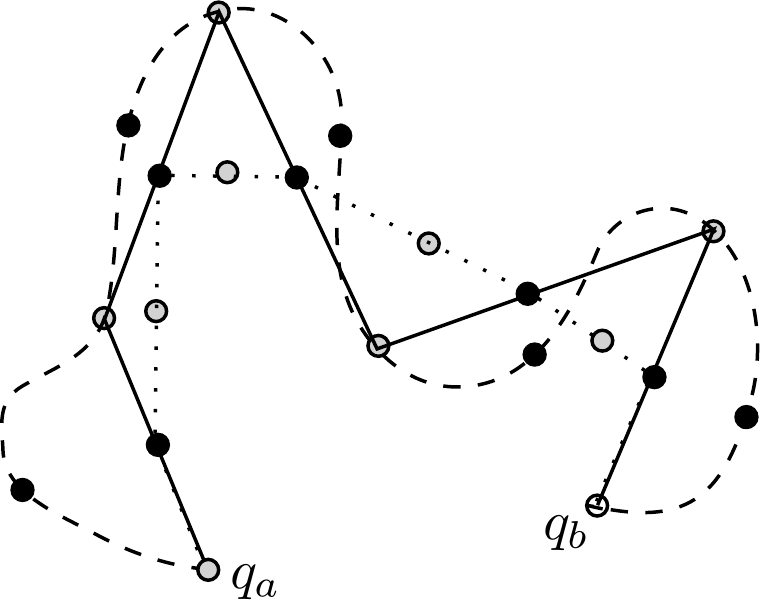}
  \caption{\label{fig:birkhoffeuclid} Birkhoff's algorithm, for the     toy example of the Euclidean metric in $\R^2$ and for $i=5$. The     initial curve is plotted as a dashed line. Points with odd index     are marked by black dots, points with even index by grey dots.  In     the first step, the points with even indices are kept fixed, and     joined by a geodesic. New positions for the points with odd     indices on the new curve are determined (solid curve). In a next     step, these points are joined by geodesics, which determines new     positions for the points with even indices (dotted curve). The     curves (slowly) converge to the geodesic line segment connecting     $q_0$ and $q_{2i}$.}
\end{figure}

One further difficulty for the computation of Hamiltonian trajectories, in particular for MD, is that these trajectories are often chaotic, and one has to restrict oneself to averaged statistical information. However, of particular interest in the analysis of rare events are trajectories going directly from one well of a potential to another one; such transition paths can be used to define so-called reaction coordinates. Often the most efficient algorithms using for example path sampling assume a knowledge of these coordinates and thus these non-chaotic trajectories are of significant practical importance. The method presented in this article is concerned with the calculation of such non-chaotic transition trajectories. Even these relatively ``simple'' trajectories are in practise very hard to compute, since they correspond to rare events and take place on very long time scales.

\subsection{Jacobi metric and Birkhoff curve shortening}
\label{sec:Birkh-curve-short}

Rather working with the Hamiltonian boundary value problem~\eqref{eq:hamildyn}--\eqref{eq:boundarycond} directly, we use an equivalent variational formulation, namely the Maupertuis principle, according to which trajectories to~\eqref{eq:hamildyn} with total energy $E$ are suitably re-parametrised geodesics with respect to the \emph{Jacobi metric}
\begin{equation}
  \label{eq:jacobi-metric-general}
  g_{ij}^\Jac(q) \defeq \(E - V(q)\) \delta_{ij} (q) 
\end{equation}
(more generally, if $Q$ is equipped with a Riemannian metric $g_{ij}$, then $g_{ij}^\Jac(q) \defeq \(E - V(q)\) g_{ij} (q)$). So Hamiltonian trajectories are critical points of the length functional associated with~\eqref{eq:jacobi-metric-general}. While the equivalence has been known for centuries, it seems that little advantage has been taken of the fact that this variational formulation has a very convenient mathematical structure. Note that Hamiltonian problems such as~\eqref{eq:hamildyn} are commonly indefinite, while a geodesic problem is elliptic and thus bounded from below.

So the problem reduces to that of finding geodesics in the Jacobi metric. This problem has been addressed by Birkhoff, who described a curve shortening procedure to find global geodesics under the assumption that local (sufficiently short) geodesics can be found explicitly. This assumption is not met here since local geodesics have to be approximated; the main task is to show that nevertheless global convergence can be obtained for a suitably devised local approximation scheme. Since the local scheme we propose also relies on a Birkhoff curve shortening idea, we present his idea in the global setting first to keep the presentation self-contained. Initially, one joins the given initial and final point $q_a$ and $q_b$ by an arbitrary curve. Then sufficiently many points are marked on the curve so that a local geodesic can be computed between next to nearest neighbours (see Figure~\ref{fig:birkhoffeuclid}). Note that the argument assumes that local geodesics can be computed with sufficient accuracy. In the first step, the points with even indices are kept fixed, and joined by a geodesic. New positions for the points with odd indices on the new curve are determined (solid curve in Figure~\ref{fig:birkhoffeuclid}). The procedure of joining next to nearest neighbouring points by local geodesics is then repeated for the points with odd indices (dotted curve in Figure~\ref{fig:birkhoffeuclid}). It is not hard to show that this iterative procedure decreases the length. Under suitable assumptions (e.g.,~\cite{Jost:02a}), this method can be shown to converge to a global geodesic; however, there are situations such as for degenerating metrics where convergence will not take place.

The central result of this article is an analogous local result, introducing a sequence of approximating sequences converging to a sufficiently short geodesic. The trade-off is that the result is local (applicable only to sufficient short geodesics), but does not assume that the approximating sequences consist of exact geodesic segments. We rely on the observation that the global Birkhoff argument localises the geodesic problem in a geometrically tractable way. That is, one can restrict the local analysis to points which are sufficiently close. We then show that they can be joined by a geodesic which in addition can be represented as a graph and prove consistency and convergence of the proposed local approximation.

\subsection{Results}
\label{sec:Results}

The main result of this article is a consistent approximation of what we call local geodesics; important aspects are that the proof is constructive and yields bounds on the allowed distance between points to be joined by local geodesics. The bounds are not in terms of the usual estimates from differential geometry (such as the injectivity radius, that is, the radius for which there is a unique geodesic starting at the centre, with arbitrary velocity), but are expressed explicitly in terms of the total and potential energy of the molecular Hamiltonian system~\eqref{eq:hamildyn}. We point out that locality of the geodesics does not necessarily require that the two end points are very close; an important aspect of the proof is that we may consider local geodesics which can be represented as graphs. Global geodesics can violate this assumption, while suitably small segments of geodesics remain geodesics and can be represented as graphs. Birkhoff's idea to segment an original connecting curve allows us to confine our algorithm to such local geodesics. The efficiency and applicability of a global Birkhoff method as in~\cite{Schwetlick:12a} will depend on the chosen parametrisation (number and location of points in Figure~\ref{fig:birkhoffeuclid}). For example, it is possible that refinements in a numerical implementation are required. However, the proof shows that in the setting studied in this article, no refinement or reparametrisation is required. 

As a by-product, we show the existence of a continuous (physical) trajectory for suitable points $q_a$, $q_b$, using the Jacobi formulation as Seifert~\cite{Seifert:48a}, but replacing the periodic boundary conditions considered there by Equation~\eqref{eq:boundarycond}. There are some related existence results~\cite{Gordon:74a,Benci:92a}, which also rely on the Jacobi formulation. The novelty of the results presented here are twofold: (i) while a formulation using the Jacobi metric is natural, a difficulty is that the metric degenerates at the boundary $\partial Q$ of the configuration space, where kinetic energy $\int\frac12 \dot q^2 \d x$ and potential energy $\int V(q) \d x$ agree. We provide \emph{a   priori} bounds to ensure that the geodesic stays away from $\partial Q$; the bounds depend on the location of the boundary points $q_a$ and $q_b$ or on the total energy $E$. Bounds could be obtained along the lines of thought presented in this paper, but in a simpler fashion. The reason why we give a more complicated argument is that the approximation we give here is constructive, giving existence of a solution and at the same time a consistent approximation procedure. Thus, we obtain an approximation procedure which may not necessarily be the most efficient but one for which can dispense with the need for trust.  Since the algorithm we develop is consistent, the issue of shadowing is answered in an affirmative way for the procedure we propose, under the assumptions made on the potential energy $V$ and the bounds in term of total energy $E$ made on the end points.

The existence of geodesics joining a given initial and final point in open domains $Q$, which is trivial within the radius of injectivity, is not obvious if the two points are further away from each other. We point out that the argument of this paper automatically proves the existence of such a geodesics for the Jacobi metric with sufficiently large total energy $E$. Other existence results, proceeding along quite different lines, can be found elsewhere~\cite{Gordon:74a,Benci:92a}. We remark that we will not address the question of how to choose $E$; this choice typically requires insight in the physics, chemistry or biology of the problem in question and thus cannot be answered in the general mathematical framework considered here. However, the existence result given here can be interpreted in two ways: given the initial point $q_a$ and the total energy $E$, the arguments provide estimates on possible locations of the final point $q_b$ so that $q_a$ and $q_b$ can be joined by a trajectory with total energy $E$. Alternatively, given $q_a$ and $q_b$, the analysis provides lower bounds on $E$ such that $q_a$ and $q_b$ can be joined with this total energy. It is easy to see that no general existence theorem can hold if $q_a$, $q_b$ and $E$ are unrestricted ($E$ determines the configuration manifold $Q$, and in particular for small $E$ the configuration manifold may be disconnected, so $q_a$ and $q_b$ could be in different components).

\subsection{Applications and limitations}
\label{sec:Appl-limit}

Approximations which are proven to be consistent, such as Godunov's scheme for hyperbolic equations, are often less efficient than algorithms for which consistency cannot be shown. The approximation introduced in this article is no exception. Yet, it is often possible to take inspiration from a consistent approximation and deduce efficient (but not provably consistent) formulations. This is the case for the formulation introduced here; a related flow model approximation has been shown to be able to detect different trajectories joining points in different wells of the energy landscape of the M\"uller potential and the collinear reaction $\mathrm{H}_2 + \mathrm{H} \to \mathrm{H} + \mathrm{H}_2$~\cite{Schwetlick:08b}. While our formulation relies both on the choice of Maupertuis' formulation and Birkhoff's curve shortening, which seems to be new to the field of numerical methods for Molecular Dynamics, the curve-shortening procedures resembles other rubber-band algorithms~\cite{Gillilan:92a}. For the isothermal case, a nudged elastic band method has been proposed by E, Ren and Vanden-Eijnden~\cite{E:02a}. There, the aim is to find minimal energy paths, which are defined as paths along which the orthogonal component of the deterministic vector field vanishes. The approach in~\cite{E:02a} to reduce the orthogonal contributions iteratively bears many similarities with the method presented here. As examples of string methods with temperature, we refer to the pioneering work by E, Ren and Vanden-Eijnden~\cite{E:03a,E:04a}.

Maupertuis' principle has been used before in MD~\cite{Banerjee:90a}, but without the connection to Birkhoff's curve shortening method. We also refer the reader to the recent work by Cameron, Kohn and Vanden-Eijnden which gives an analysis and in particular convergence results for a steepest descent string method~\cite{Cameron:11a}.

The usual numerical approach for a boundary value problem is a shooting method; there, the existence of a solution is assumed as well as the closeness of an initial guess to the solution. (There are abstract existence results available, see for example~\cite{Keller:92a}; however, as noted by Stoer and Bulirsch~\cite[7.3.3]{Stoer:90a}, the abstract formulation of the boundary conditions to be imposed rules out the condition~\eqref{eq:boundarycond}, for the first-order system considered there, even for the case $n=2$.) The method discussed in this paper provides both an explicit existence proof and estimates on the closeness required.

We point out that the Birkhoff algorithm also provides a strategy for gluing together local geodesics to obtain global paths, which converge to a global geodesic.  Note that this latter aspect of the Birkhoff approach defines a natural tool to localise the computation of geodesics which can be exploited for parallelisation. This aspect is discussed in more detail in~\cite{Schwetlick:12a}. An advantage of Birkhoff's   method is that these local steps are intrinsically parallelisable.

Finally, we mention connections to the Onsager-Machlup / Freidlin-Wentzell theory. There, an action functional is derived, with the minimal action path describing the most likely trajectory. The connection between that theory, Hamilton-Jacobi theory and the Maupertuis principle as discussed here is a topic with many open questions; we refer to~\cite{E:2004a} for results in this direction together with applications to rare events.

\subsection{Notation}
\label{sec:Notation}

Throughout the presentation, $Q$ is the \emph{configuration manifold} of a system and thus describes all possible states the system can occupy. The coordinates of the \emph{phase space} (cotangent bundle) $T^*Q$ are $\(q^j,p_j\)$, position and momentum. Analogously, the coordinates of the tangent bundle $TQ$ are $\(q^j, \dot{q}^j\)$, where $\dot{q}^j$ denotes the velocity. We assume that the system dynamics is conservative with $3N$ degrees of freedom. Then, the \emph{Hamiltonian} $H \colon T^*Q\to \R$ is defined as $H \defeq E \defeq T + V$. Here, the kinetic energy $T = T(p)$ is a function of the momenta only and $V=V(q)$ is the potential energy, depending on the coordinates $q$ alone. The \emph{Lagrangian} of the system is a function $L\colon TQ\to \R$, namely $L(q, \dot q) = T - V$. For a wide class of applications, it is sufficient to consider $\sp{\dot q}{\dot   q}=\sum_{j=1}^{3N} m_j \dot q_j \dot q_j$, the inner product for a system with $N$ particles with mass $m_j$, and to assume that the Lagrangian is of the standard form
\begin{equation}
  \label{eq:ParticularLagrangian}
  L\(q,\dot{q}\) \defeq \frac{1}{2} \sp{\dot{q}}{\dot{q}} - V(q).
\end{equation}

This paper is organised as follows: In Section~\ref{sec:continuous-setting}, we review the Maupertuis principle and the Birkhoff curve shortening algorithm, both for the continuous setting. Section~\ref{sec:Discr-sett-Birkh} describes the analogous discrete setting, contains the relevant \emph{a priori} estimates and introduces the discrete Birkhoff procedure for a fixed discretisation. Section~\ref{sec:Birkhoff-refinement} describes the Birkhoff refinement and convergence to the continuous limit. Numerical simulations and numerical convergence rates for a model problem are the content of Section~\ref{sec:Numer-investigations}.

\section{The continuous setting}
\label{sec:continuous-setting}

Our construction will be guided by a variational formulation, equivalent to~\eqref{eq:hamildyn}, where convergent approximations can be obtained with relative ease. This continuous setting is sketched in the present section.

It is a well known fact that solutions to~\eqref{eq:hamildyn} with pre-assigned total energy $E$ are re-parametrised geodesics in \emph{Jacobi's metric}~\eqref{eq:jacobi-metric-general}. This formulation is sometimes denoted \emph{Maupertuis' principle}, or \emph{Jacobi's least action principle}. For the special Lagrangian~\eqref{eq:ParticularLagrangian}, the \emph{Routhian} associated with Jacobi's principle is
\begin{equation}
  \label{eq:ParticularRouthian}
  R(q,q') = 2(E-V(q))\sp{q'}{q'} .
\end{equation}
Obviously, $R\(\gamma,\gamma'\)$ is a metric in those regions of $Q$ where $V(q) < E$. The action functional
\begin{equation}
  \label{eq:ActionRouthian}
  J[\gamma] \defeq \int_a^b R\(\gamma,\gamma'\) \d \tau
\end{equation}
is the measure of the length of $\gamma$ in this metric. For a given curve $\gamma$, the value $J[\gamma]$ is often called the \emph{energy} of $\gamma$; the \emph{length} of the curve is then
\begin{equation}
  \label{eq:LengthRouthian}
  L[\gamma] \defeq \int_a^b \sqrt{R\(\gamma,\gamma'\)} \d \tau .
\end{equation}
It is trivial to verify that critical points of the energy functional are critical points of the length functional; the converse is true for curves parameterised by arc length. The length functional is invariant under re-parametrisations, while a minimiser of the energy functional is automatically parameterised by arc length.

The Maupertuis' principle seeks geodesics, that is, stationary solutions of the functional~\eqref{eq:ActionRouthian} with the metric~\eqref{eq:ParticularRouthian}. Maupertuis' principle has been employed in a number of computational approaches and is regarded as a very accurate method for the verification of other algorithmic formulations~\cite{Olender:96a}.

We already mentioned in passing that solutions of~\eqref{eq:hamildyn} are re-parametrisations of solutions of~\eqref{eq:ActionRouthian} (or, equivalently,~\eqref{eq:LengthRouthian}). The re-parametrisation is such that the physical time for a solution of~\eqref{eq:hamildyn} can be recovered via the explicit formula
\begin{equation}
  \label{eq:time}
  t = \int_0^\tau \sqrt\frac{\sp{q'}{q'}}{2(E-V)}\d s.
\end{equation}

With the exception of~\cite{Schwetlick:08b}, numerical methods for~\eqref{eq:hamildyn} seem, to the best of our knowledge, not have taken advantage of the geodesic formulation~\eqref{eq:ParticularRouthian} (see~\cite{Schwetlick:08b} for a recent survey and a method that relies on observations similar to, but simpler than, those made in the present article). This is somewhat surprising, since the \emph{Birkhoff curve shortening   algorithm} is a classic method for the convergent approximation of geodesics.

\subsection{Existence of extended geodesics}
\label{sec:Exist-extend-geod}

Birkhoff's curve shortening method~\cite{Birkhoff:66a} is a constructive way to find extended geodesics, based on the assumption that local (short) geodesics (that is, geodesics joining points within the radius of injectivity) can be computed exactly. In this subsection, we recall the classic Birkhoff method; the main part of the paper then addresses the question of how to find the local geodesics constructively, for the case of the Jacobi metric. A straightforward implementation of the Birkhoff method relies on an approximation of local geodesics within the radius of injectivity and thus requires knowledge of the radius of injectivity.  For the complex energy landscapes we have in mind, this radius is not easily computable in a quantitative way.  Thus, we have here two aims. Firstly, we present an algorithm for the numerical approximation of local geodesics in an explicitly given neighbourhood. Secondly, we obtain a quantitative description of the size of this neighbourhood. A Birkhoff method then glues together these local geodesics to obtain an extended piecewise geodesic curve.

The aim of this article is to develop a discrete framework that mimics the Birkhoff procedure. Besides the usual difficulty of discretisation errors inherent in any numerical approach, we face the challenge that even the computation of local geodesics, as in Birkhoff's algorithm, is time-consuming and difficult to control for non-Euclidean metrics. We propose an approximation of Jacobi's metric~\eqref{eq:ParticularRouthian} by the trapezoidal rule. The key observation is that the difference between the Jacobi metric and the Euclidean metric occurs on a fine scale, described in greater detail below. The analysis of Section~\ref{sec:Discr-sett-Birkh} will show that crucial bounds on the discrete curvature in a Birkhoff procedure can be obtained since it is possible to show that in some (quantitatively characterised) situations the Birkhoff procedure for the Jacobi metric is locally identical to that for the Euclidean metric. The analysis will also reveal that for other configurations, that is, other geometric configurations, the two approximations differ, which results in the Jacobi procedure making steps which seem counter-intuitive if regarded within a Euclidean picture. Obviously, such a disagreement of the two approximations is necessary as we need to compute a geodesic in the Jacobi metric and thus have to differ at some point from the Euclidean picture.

\section{A  local discretised Birkhoff method}
\label{sec:Discr-sett-Birkh}

This section mimics the continuous framework laid out in Section~\ref{sec:continuous-setting} in a discrete setting.

\subsection{The discrete setting}
\label{sec:discrete-setting}

Throughout this section, we assume that the total energy $E$ is sufficiently large, as described in the next paragraph. We point out that the choice of $E$ determines the configuration manifold $Q\subset \R^n$, which we take as the set of points $q$ where $E-V(q)>0$. Let $q_a$ and $q_b\in Q$ be given; define $\ell = \frac{\abs{q_a-q_b}}2$, where $\abs{\cdot}$ is the Euclidean distance on $Q$ (not the Jacobi distance). We choose an orthonormal basis for $Q$ such that $q_b - q_a = 2\ell e_1$.  For $q \in Q$, we write $q = (X,Y)\in\R\times\R^{n-1}$, and in particular $e_1=(1,0)$.

Let us write the Jacobi metric in the form
\begin{equation}
  \label{eq:metric}
g_{ij}^\Jac(q)
  =e^{2h}(q)\delta_{ij} . 
\end{equation}

We require $E$ to be sufficiently large so that the line segment joining $q_a$ and $q_b$ is contained in $Q$ as well. In fact, we will work in a framework where either $E$ is chosen large enough, depending on the given points $q_a$ and $q_b$, or, given $E$, we choose points $q_a$ and $q_b$ in $Q$ with sufficiently small distance $\ell$, such that $q_b\in B_\ell(q_a)\subset Q$.

We consider a convex set $Q_H$ such that $Q_H$ is compactly contained in $Q$, $Q_H \subset\!\!\!\subset Q$; then the Jacobi metric is not degenerate, and hence Riemannian, on $Q_H$. It is bounded from above and from below by Euclidean metrics, but we do not use this fact. Then, if $V$ is sufficiently smooth, the metric factor $h$ is $C^{1,\al}(Q_H)$ and there is a finite $H\in\R $ such that the estimates
\begin{align}
  \abs{h(X+x,Y+y)
    -\(h(X,Y)+x\ddx
    h(X,Y)+y\cdot\nabla_Nh(X,Y)\)} 
  &\le H\(\abs x+\abs y\)^{1+\al}, \label{eq:metric-a} \\
  \abs{\nabla_Nh(X+x,Y+y)-\nabla_Nh(X,Y)}
  &\le H\(\abs x+\abs y\)^\al  \label{eq:metric-b}
\end{align}
hold.

Applying Equation~\eqref{eq:LengthRouthian} to the length of a straight line segment $\seg{q_1}{q_2}$ between two points,
\begin{equation*}
  \gamma_{\seg{q_1}{q_2}}(t):=q_1+t\(q_2-q_1\) \text{ for } t\in(0,1),
\end{equation*}
we obtain
\begin{equation}\label{eq:lineseg}
  L[\gamma_{\seg{q_1}{q_2}}] =
  \int_0^1 e^h (\gamma_{\seg{q_1}{q_2}}(t))  
  \norm{\dot\gamma_{\seg{q_1}{q_2}}(t)} \d t 
  =
  \int_0^1 e^h (\gamma_{\seg{q_1}{q_2}}(t)) \d t\ \norm{q_1-q_2}.
\end{equation} 

We now introduce the discretised setting. We first define an equidistant Cartesian grid on $Q_H \subset\R^n$.
We discretise the integral for the length of a straight line segment as follows.
\begin{definition}[Discretised length of segment]
  \label{def:dis-length}
  For the straight line segment $\seg{q_1}{q_2}$ between two points on   the grid, we define the \emph{discretised length} by applying the   2-point trapezoidal rule to~\eqref{eq:lineseg}
  \begin{equation*}
    \Lbar_{\seg{q_1}{q_2}}\defeq
\frac{e^{h} (\gamma_{\seg{q_1}{q_2}}(0))+e^{h} (\gamma_{\seg{q_1}{q_2}}(1))}2\norm{q_1-q_2}
=\frac{e^{h} (q_1)+e^{h} (q_2)}2\norm{q_1-q_2}.
  \end{equation*}
\end{definition}

Below we will introduce a discrete Birkhoff procedure which chooses to move points of polygonal curves in a direction normal to $e_1$ in order to decrease the length. It is thus necessary to estimate changes in the length as normal variations of a curve are considered. This is the content of the following lemma.

\begin{lemma}
  \label{lem:diff-quot-disc}
  Let $Q_H$ be convex set such that $Q_H \subset\!\!\!\subset Q$ and   assume the metric factor satisfies the H\"older   estimates~\eqref{eq:metric-a} and~\eqref{eq:metric-b} for $H>0$.   Let $q=(X,Y)\in Q_H$ be a grid point.

  Assume further that $\e\ne0$ and $\de,\De\in\R^{n-1}$ are such that   $\de\neq0$ and $q_\e=q+(\e,\De)$ and $q_\de=q+(0,\de)$ are also grid   points in $Q_H$ satisfying the bounds
  \begin{equation}
    \label{eq:basicass-disc}
    \abs{\frac\De\e}\le 1
    \text{\quad and\quad}
    \abs{\frac\de\e}\le 1.
  \end{equation}
  Then the difference quotient
  \begin{equation}
    \label{eq:DD-disc}
    \DDbar\e\de\De\defeq \frac{\Lbar_{\seg{q_\de}{q_\e}}-\Lbar_{\seg{q}{q_\e}}}{\abs{\e\de}}
  \end{equation}
  satisfies
  \begin{align*}
    \DDbar\e\de\De&=\frac\de{\abs\de}\cdot\(
    -\frac{\E{\frac\e2}\G\(\frac{\De}\e\)}\e
    +
    \frac12\nabla_Ne^h(q)\PG\(\frac{\De}\e\)
    \)
    +O\(\abs{\frac\de{\e^2}}\)+O\bigl(\abs\e^\al\bigr)
    ,
  \end{align*}
  where
  \begin{equation}
    \label{eq:G}
    q_{\frac\e2} = \frac{q+q_\e}2=\sse \text{ and }
   \PG(\eta)\defeq\sqrt{1+\abs{\eta}^2}
   ,\quad
    \G(\eta)\defeq\nabla\PG(\eta)=\frac\eta{\PG(\eta)}
    .
  \end{equation}
\end{lemma}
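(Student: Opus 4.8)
The plan is to expand $\DDbar\e\de\De$ directly from Definition~\ref{def:dis-length}. Since $q-q_\e=-(\e,\De)$ and $q_\de-q_\e=(-\e,\de-\De)$, the two Euclidean norms factor through $\PG$,
\[
  \norm{q-q_\e}=\abs\e\,\PG\(\tfrac\De\e\),
  \qquad
  \norm{q_\de-q_\e}=\abs\e\,\PG\(\tfrac{\De-\de}\e\),
\]
so Definition~\ref{def:dis-length} gives $\Lbar_{\seg{q}{q_\e}}=\tfrac12\(e^h(q)+e^h(q_\e)\)\abs\e\,\PG\(\tfrac\De\e\)$ and $\Lbar_{\seg{q_\de}{q_\e}}=\tfrac12\(e^h(q_\de)+e^h(q_\e)\)\abs\e\,\PG\(\tfrac{\De-\de}\e\)$. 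Subtracting, the factor $\abs\e$ cancels against the $\abs\e$ in $\abs{\e\de}$, and an elementary rearrangement yields
\[
  \DDbar\e\de\De=\frac1{\abs\de}\Biggl[\frac{e^h(q_\de)-e^h(q)}2\,\PG\(\tfrac{\De-\de}\e\)
  +\frac{e^h(q)+e^h(q_\e)}2\(\PG\(\tfrac{\De-\de}\e\)-\PG\(\tfrac\De\e\)\)\Biggr],
\]
in which the first summand records the change of the conformal factor under $q\mapsto q_\de$ and the second the change of Euclidean length.

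Next I would Taylor-expand the two ingredients. The H\"older bound~\eqref{eq:metric-a}, together with the boundedness of $h$ — hence of $e^h$ — on the compact set $Q_H$ and $\al\in(0,1]$, yields the $C^{1,\al}$-estimate
\[
  e^{h(p+v)}=e^{h(p)}\(1+\nabla h(p)\cdot v\)+O\(\abs v^{1+\al}\),
\]
uniformly for $p,p+v\in Q_H$. With $\nabla_Ne^h=e^h\nabla_Nh$ this gives both $e^h(q_\de)-e^h(q)=\nabla_Ne^h(q)\cdot\de+O\(\abs\de^{1+\al}\)$ and, applied at the midpoint $q_{\frac\e2}$ (which lies in $Q_H$ by convexity) to $q$ and to $q_\e$, the symmetric cancellation $\tfrac12\(e^h(q)+e^h(q_\e)\)=e^h(q_{\frac\e2})+O\(\abs\e^{1+\al}\)$. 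For the Euclidean factor, $\PG$ is smooth with $\abs{\nabla\PG}=\abs\G<1$, so on the compact parameter range $\abs{\De/\e}\le1$, $\abs{\de/\e}\le1$ one has $\PG\(\tfrac{\De-\de}\e\)-\PG\(\tfrac\De\e\)=-\G\(\tfrac\De\e\)\cdot\tfrac\de\e+O\(\abs{\tfrac\de\e}^2\)$ as well as the crude Lipschitz bound $\PG\(\tfrac{\De-\de}\e\)=\PG\(\tfrac\De\e\)+O\(\abs{\tfrac\de\e}\)$.

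Substituting these into the bracket and multiplying out, the two surviving contributions are $\tfrac1{\abs\de}\cdot\tfrac12\(\nabla_Ne^h(q)\cdot\de\)\PG\(\tfrac\De\e\)=\tfrac12\,\tfrac\de{\abs\de}\cdot\nabla_Ne^h(q)\,\PG\(\tfrac\De\e\)$ and $\tfrac1{\abs\de}\cdot e^h(q_{\frac\e2})\(-\G\(\tfrac\De\e\)\cdot\tfrac\de\e\)=-\tfrac1\e\,\tfrac\de{\abs\de}\cdot e^h(q_{\frac\e2})\,\G\(\tfrac\De\e\)$; together these are exactly the asserted leading term $\tfrac\de{\abs\de}\cdot\bigl(-e^h(q_{\frac\e2})\G(\tfrac\De\e)/\e+\tfrac12\nabla_Ne^h(q)\PG(\tfrac\De\e)\bigr)$. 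Every remaining contribution is the product of an $O$-term with a bounded factor: from the first summand $O(\abs\de)\cdot O(\abs{\de/\e})$ and $O(\abs\de^{1+\al})$, from the second $O(\abs\e^{1+\al})\cdot O(\abs{\de/\e})$, $O(\abs{\de/\e}^2)$, and a higher-order cross term. Dividing by $\abs\de$ these become $O(\abs\de/\abs\e)+O(\abs\de^\al)+O(\abs\e^\al)+O(\abs{\de/\e^2})$, and since $\abs\de\le\abs\e$ the first two are each dominated by $O(\abs{\de/\e^2})+O(\abs\e^\al)$ (distinguishing $\abs\e\le1$ and $\abs\e>1$), giving the stated remainder.

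The only delicate part is this bookkeeping of error orders. Two points deserve care: the genuinely quadratic-in-$\de$ pieces survive division by $\abs\de$ only as $O(\abs{\de/\e^2})$; and it is essential that the trapezoidal-rule remainder at the midpoint be $O(\abs\e^{1+\al})$ rather than merely $O(\abs\e)$ — this is where the H\"older (not just Lipschitz) regularity of $h$ is used, through the cancellation of the linear term in the symmetric midpoint configuration, and with only $O(\abs\e)$ there the error would fail to vanish. Beyond this there is no structural obstacle and the remaining steps are routine.
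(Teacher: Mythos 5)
Your proof is correct and takes essentially the same route as the paper's: both split the difference of the two discretised lengths into (change of the trapezoidal average of $e^h$) times a $\PG$-factor plus (average of $e^h$) times (change of $\PG$), then Taylor-expand using the H\"older bounds — with the symmetric midpoint cancellation producing $e^h(q_{\frac\e2})$ — and the global boundedness of $D\G$. The only differences are cosmetic: you use the asymmetric product splitting where the paper averages both factors, and you are somewhat more explicit than the paper in transferring the H\"older estimates from $h$ to $e^h$ and in absorbing the $O(\abs{\de}/\abs{\e})$ and $O(\abs{\de}^\al)$ cross terms into the stated remainder.
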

Note that $\abs{\G(\eta)}\le1$ for all $\eta\in\R$.

\begin{proof}
We can write
\begin{equation*}
  \frac1{\abs\e}\Lbar_{\seg{q_\de}{q_\e}}
  = 
  \frac{\E{\de}+\E{\e}}2
  \cdot\PG\(\frac{\De-\de}\e\).
\end{equation*}
Then it follows directly that
\begin{align*}
  &\DD\e\de\De=D_1S_2+S_1D_2
\end{align*}
with
\begin{align*}
  D_1&=\frac1{\abs\de}\left[\frac{\E{\de}+\E{\e}}2-\frac{\E{}+\E{\e}}2\right]
  =\frac1{2\abs\de}\left[\E{\de}-\E{}\right] 
  ,\\
  S_1&=\frac12\left[\frac{\E{\de}+\E{\e}}2+\frac{\E{}+\E{\e}}2\right]
  =\frac12\left[\E{}+\E{\e}\right]+\frac{\abs\de}2\cdot D_1,\\
  D_2&=\frac1{\abs\de}
  \(\PG\(\frac{\De-\de}\e\)-\PG\(\frac{\De}\e\)\),\\
  S_2&=\frac12 \(\PG\(\frac{\De-\de}\e\)+\PG\(\frac{\De}\e\)\)
  =\PG\(\frac{\De}\e\)+\frac{\abs\de}2D_2 .
\end{align*}
Let us define $D_{10}$ by writing
\begin{equation*}
  D_1=\frac12\frac\de{\abs\de}\cdot\nabla_N\E{}+D_{10},
\end{equation*}
where we estimate the error term $D_{10}$, using the H\"older bounds for the metric factor $h$
\begin{align*}
  \abs{D_{10}}&=\frac1{2\abs\de}\abs{\E{\de}-\E{}-\de\cdot\nabla_N\E{} }
  \le \abs{\de}^\al \frac H2.\\
\end{align*}
Secondly, we consider $S_1$ and write for the first term
\begin{equation*}
  \frac12\left[\E{}+\E{\e}\right]=\E{\frac\e2}+S_{10}
\end{equation*}
with $q_{\frac\e2}$  defined in~\eqref{eq:G} being the mid-point on the segment
$\seg{q}{q_\e}$. By symmetry, the H\"older bounds imply
\begin{align*}
  \abs{S_{10}}&
  \le H\(\abs\e+\abs\De\)^{1+\al}=
  H\(1+\abs{\frac\De\e}^{1+\al}\)\abs{\e }^{1+\al}\le 2H\abs{\e }^{1+\al}.
\end{align*}

Furthermore, we deduce
\begin{align*}
  D_2&=\frac1{\abs\de}
  \(\PG\(\frac{\De-\de}\e\)-\PG\(\frac{\De}\e\)\)
    =\frac1{\abs\de}\int^1_0F\(\frac{\De-\de            
      s}\e\)\cdot\frac{-\de}\e\d s
    ,
\end{align*}
where $\G$ is defined in~\eqref{eq:G}. Thus, we can write
\begin{align*}
  D_2&=-\frac\de{\abs\de}\cdot\frac1{\e} \G+ D_{20},
\end{align*}
where, using  $\abs{DF(\eta)}\le C$ for all $\eta\in\R$,
\begin{align*}
  \abs{D_{20}}&=\frac1{\abs\de}\abs{\int_0^1\int_0^s
\(\frac{-\de}{\e}\)^t\cdot
DF\(\frac{\De-\de \tau}\e\)
\cdot\(\frac{-\de}{\e}\)
\d\tau\d s}
\le
\abs{\frac\de{\e^2}}C.
\end{align*}

We summarise
\begin{align*}
  D_1S_2+S_1D_2
  &=
  \(\frac12\frac\de{\abs\de}\cdot\nabla_N\E{}+D_{10}\)
  \(\PG\(\frac{\De}\e\)+\frac{\abs\de}2D_2\)
  \\&
  +\(\E{\frac\e2}+S_{10}+\frac{\abs\de}2D_1\)
  \(-\frac\de{\abs\de}\cdot\frac1{\e} \G\(\frac{\De}\e\)+ D_{20}\)
  \\
  &=\frac\de{\abs\de}\cdot\(\frac12\nabla_N\E{}
  \PG\(\frac{\De}\e\)
  -\E{\frac\e2}\frac1{\e} \G\(\frac{\De}\e\)\)+C\abs{\frac\de{\e^2}}+O(\abs\e^\al)
  ,
\end{align*}
with the functions $\G$ and $\PG$ from~\eqref{eq:G}.
\end{proof}

We now prepare a crucial quantitative upper bound for the discrete bend of a polygon (Proposition~\ref{prop:rhombus}).  To this aim, consider three neighbouring points along the polygon. We assume that for this triplet $(q_{-\e},q,q_{+\e})$, the $X$ co-ordinates are distributed equidistantly for an $\e>0$,
\begin{equation*}
  q=(X,Y), 
\quad
  q_{\pm\e}=q+(\pm\e,\De_\pm) . 
\end{equation*}
For a given $\de\ne0$, we want to estimate the \emph{centred differences}
\begin{equation*}
  \Cbar(\e,\de)\defeq
  \(\Lbar_{\seg{q_{-\e}}{q+(0,\de)}}
  +\Lbar_{\seg{q+(0,\de)}{q_{+\e}}}   \)
  -\(\Lbar_{\seg{q_{-\e}}{q}} +
  \Lbar_{\seg{q}{q_{+\e}}}\).
\end{equation*}
Using centred coordinates, we can rewrite this as
\begin{equation*}
  \Cbar(\e,\de)=\e\abs{\de} \(\Dbar^{-}+\Dbar^{+}\),
\end{equation*}
where
\begin{equation*}
  \Dbar^{\pm}\defeq \DDbar{\pm\e}{\de}{\De_\pm}
\end{equation*}
with $\DDbar{\pm\e}{\de}{\De_\pm}$ defined in~\eqref{eq:DD-disc}. 

We will now combine the length calculation of Lemma~\ref{lem:diff-quot-disc} on both sides of the centre point $q$. We define, in analogy to~\eqref{eq:G},
\begin{align}
  q_{\frac{\pm\e}{2}}&=q+\tfrac 1 2 (\pm\e,\De_\pm) 
\label{eq:E-pm} \\
  \G_\pm& \defeq 
  \G\(\frac{\De_\pm}{\pm\e}\)=
  \frac{\frac{\De_\pm}{\pm\e}}{\sqrt{1+\abs{\frac{\De_\pm}{\pm\e}}^2}}.
  \label{eq:G-pm}
\end{align}
and obtain 
\begin{align}
\label{eq:Ceed}
\begin{split}
  \frac{\Cbar(\e,\de)}{\e\abs{\de}}
  &=\Dbar^{-}+\Dbar^{+}=\\
  &=\frac\de{\abs\de}\cdot\(
  -\frac{\E{\frac{-\e}2}\G_-}{-\e}
  +
  \frac12\nabla_Ne^h(q)\sqrt{1+\abs{\frac{\De_-}{-\e}}^2}
  \right.\\&\qquad\qquad\left.
  -\frac{\E{\frac\e2}\G_+}\e
  +
  \frac12\nabla_Ne^h(q)\sqrt{1+\abs{\frac{\De_+}{\e}}^2}\)
  \\&\quad
  +O\(\abs{\frac\de{\e^2}}\)+O\bigl(\abs\e^\al\bigr)
  \\
  &=\frac\de{\abs\de}\cdot\(
  - \frac{\E{\frac\e2}\G_+-\E{\frac{-\e}2}\G_-}{\e}
  %\right.\\&\qquad\qquad\left.
  +
  \nabla_Ne^h(q)\sqrt{1+\abs{\frac{\De_+}{\e}}^2}\)
  \\&\quad
  -
  \frac\de{\abs\de}\cdot\nabla_Ne^h(q)
  \(\frac{\sqrt{1+\abs{\frac{\De_+}{\e}}^2}
    -\sqrt{1+\abs{\frac{\De_-}{-\e}}^2}}2\)
  % \\&\quad
  +O\(\abs{\frac\de{\e^2}}\)+O\bigl(\abs\e^\al\bigr)
  \\
  &=\frac\de{\abs\de}\cdot\(
  -\frac{\E{\frac\e2}\G_+-\E{\frac{-\e}2}\G_-}{\e}
  %\right.\\&\qquad\qquad\left.
  +
  \nabla_Ne^h(q)\sqrt{1+\abs{\frac{\De_+}{\e}}^2}\)
  \\&\quad
  -
  \frac\de{\abs\de}\cdot\nabla_Ne^h(q)
  \frac{\({\frac{\De_+}{\e}}+{\frac{\De_-}{-\e}}\)
    \({\frac{\De_+}{\e}}-{\frac{\De_-}{-\e}}\)}
  {2\(\sqrt{1+\abs{\frac{\De_+}{\e}}^2}
    +\sqrt{1+\abs{\frac{\De_-}{-\e}}^2}\)}
  \\&\quad
  +O\(\abs{\frac\de{\e^2}}\)+O\bigl(\abs\e^\al\bigr)
  .
  \end{split}
\end{align}
Let us remark that the term 
\begin{equation*}
  \frac1\e\({\E{\frac\e2}\G_+-\E{\frac{-\e}2}\G_-}\)
=
\frac1\e\({\E{\frac\e2}\G\(\frac{\De_+}{\e}\)-\E{\frac{-\e}2}\G\(\frac{\De_-}{-\e}\)}\)
\end{equation*}
is a proper     difference quotient for the discretisation length $\e$.

\begin{figure}
  \centering
  \includegraphics*[scale=0.5]{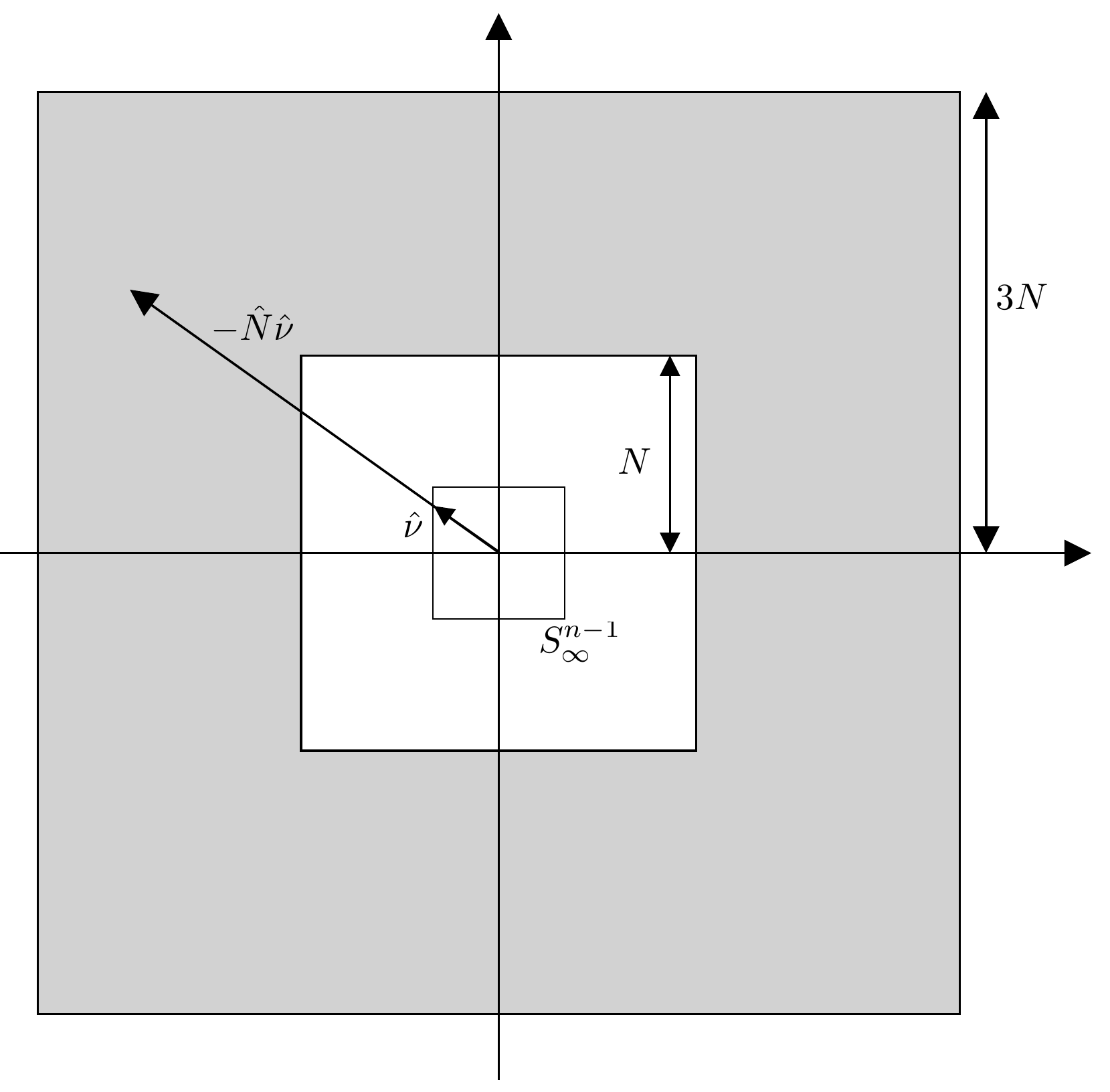}
  \caption{Geometric configuration as set out in     Proposition~\ref{prop:rhombus}}
  \label{fig:rhombus}
\end{figure}

The next result shows that the length of the edges of a three point polygon can be reduced by moving 
the middle point towards the line connecting the two outer points.
\begin{proposition}
  \label{prop:rhombus}
  Let $S_\infty^{n-1}$ be the $l^\infty$-sphere in $\R^{n-1}$,
  \begin{equation*}
    S_\infty^{n-1} \defeq \left\{\nnu \in \R^{n-1}
      \bigm| \sup_{j\in\{2, \dots, n\}} \abs{e_j \cdot \nnu} \le 1
      \text{ and } \abs{e_k \cdot \nnu} = 1  \text{ for some } 
      k \in       \{2, \dots, n\} \right\} .
  \end{equation*}
  There exists $N>1$ and $\e_0=\e_0(N)\in(0,1/N)$ such that for all $\e   \in(0,\e_0)$ and all triplets
  \begin{equation*}
  \(q_{-\e},q,q_\e\)
\quad\text{with}\quad
  q_{\pm\e}=q+(\pm\e,\De_\pm)
  \end{equation*}
  which satisfy
  \begin{align}
    \abs{\frac{\De_\pm}{\pm\e}}&\le 1 \label{eq:F-bound}
    \intertext{and}
    \frac{\De_++\De_-}{\e^2}&= -\hat N\hat\nnu, 
    \text{ with $\hat N\in\(N, 3N\)$ and $\hat\nnu\in S_\infty^{n-1}$,  
      see Fig.~\ref{fig:rhombus}},
    \label{eq:av-bound}
  \end{align} 
  there holds
  \begin{equation*}
    \begin{cases}
      \Cbar(\e,\de)>0 & \text{for every } \de\text{ with }      
      \abs\de\le\e^{\al+2} \text{ and }\frac{\de}{\abs{\de}}
      \cdot\hat\nnu=1, \\
      \Cbar(\e,\de)<0 & \text{for every } \de\text{ with }
      \abs\de\le\e^{\al+2} \text{ and }\frac{\de}{\abs{\de}}
      \cdot\hat\nnu=-1.
    \end{cases}
  \end{equation*}
\end{proposition}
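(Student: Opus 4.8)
The plan is to feed the hypotheses~\eqref{eq:F-bound}--\eqref{eq:av-bound} into the expansion~\eqref{eq:Ceed} of $\Cbar(\e,\de)/(\e\abs\de)$ and show that the sign of the bracketed main term is controlled by $\frac\de{\abs\de}\cdot\hat\nnu$, with the error terms $O(\abs{\de/\e^2})+O(\abs\e^\al)$ rendered negligible by the smallness assumptions $\abs\de\le\e^{\al+2}$ and $\e<\e_0$. Writing $r_\pm\defeq\De_\pm/(\pm\e)\in\R^{n-1}$, so that $\abs{r_\pm}\le1$ by~\eqref{eq:F-bound}, and noting $r_++r_-=\De_+/\e-\De_-/\e=(\De_++\De_-)/\e=-\e\,\hat N\hat\nnu$ by~\eqref{eq:av-bound}, the key point is that $r_++r_-=O(\e)$ is small. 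First I would Taylor-expand the two quantities $\E{\pm\e/2}=e^h(q)+O(\e)$ and $\sqrt{1+\abs{r_+}^2}=\sqrt{1+\abs{r_-}^2}+O(\e)$ (the latter because $\abs{r_+}^2-\abs{r_-}^2=(r_++r_-)\cdot(r_+-r_-)=O(\e)$), so that the main bracket in~\eqref{eq:Ceed} collapses, up to $O(\e)$, to
\begin{equation*}
  e^h(q)\(-\frac{\G_+-\G_-}\e+\nabla_Ne^h(q)\cdot e^h(q)^{-1}\sqrt{1+\abs{r_+}^2}\),
\end{equation*}
and similarly the second bracket (the one with the difference of square roots) is $O(\e)$ and can be absorbed. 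The remaining task is to identify $-(\G_+-\G_-)/\e$. Since $\G_\pm=\G(r_\pm)$ with $\G=\nabla\PG$, and $r_+-r_-=(\De_++\De_-)/\e=\cdots$ wait, more carefully $r_+-r_- = \De_+/\e + \De_-/\e$? No: $r_-=\De_-/(-\e)$ so $r_+-r_-=\De_+/\e+\De_-/\e=(\De_++\De_-)/\e=-\e\hat N\hat\nnu$ as well — both sum and difference of $r_\pm$ degenerate. Let me instead just use $\G_+-\G_-=\nabla\PG(r_+)-\nabla\PG(r_-)=D^2\PG(\xi)(r_+-r_-)$ for some $\xi$ on the segment, so $(\G_+-\G_-)/\e = D^2\PG(\xi)\cdot(-\hat N\hat\nnu)$, which is $\hat N\,D^2\PG(\xi)\hat\nnu$ in magnitude of order $\hat N\sim N$.

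Putting this together, $\Cbar(\e,\de)/(\e\abs\de)$ equals $\frac\de{\abs\de}\cdot\(e^h(q)\,\hat N\,D^2\PG(\xi)\hat\nnu + O(1)\) + O(\abs{\de/\e^2})+O(\abs\e^\al)$, where the $O(1)$ collects the $\nabla_Ne^h$ term and the $O(\e)$ corrections (bounded independently of $N$, using that $Q_H$ is compact and $h\in C^{1,\al}$). The crucial structural fact I would need is that $D^2\PG(\xi)\hat\nnu$ has a definite inner product with $\hat\nnu$ — indeed $D^2\PG$ is the Hessian of $\sqrt{1+\abs\eta^2}$, which is positive definite with a lower bound on $\abs\eta\le$ const, so $\hat\nnu\cdot D^2\PG(\xi)\hat\nnu\ge c_0>0$ for a dimensional constant $c_0$ (here $\abs{\hat\nnu}\ge1$ since $\hat\nnu\in S_\infty^{n-1}$). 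Hence when $\frac\de{\abs\de}\cdot\hat\nnu=1$ the main term is $\ge e^h(q)\hat N c_0 - C_1 \ge \vrh_0 N c_0 - C_1$ for fixed constants $\vrh_0=\min_{Q_H}e^h>0$ and $C_1$; choosing $N$ large makes this strictly positive and bounded below by a positive constant, and then choosing $\e_0$ small makes the errors $O(\abs\e^\al)\le O(\e_0^\al)$ and $O(\abs{\de/\e^2})\le O(\e^{\al})$ (using $\abs\de\le\e^{\al+2}$) smaller than that constant. Multiplying back by $\e\abs\de>0$ gives $\Cbar(\e,\de)>0$; the case $\frac\de{\abs\de}\cdot\hat\nnu=-1$ is identical with signs reversed.

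The main obstacle I anticipate is bookkeeping the $N$-dependence uniformly: the "main term" scales like $N$, but several error contributions (the $O(\e)$ from expanding $\E{\pm\e/2}$ and the square roots, and the $\nabla_Ne^h$ term multiplied by $\sqrt{1+\abs{r_+}^2}\le\sqrt2$) must be shown to be $O(1)$ in $N$ so that they are beaten by $Nc_0$; meanwhile $\e_0$ is allowed to depend on $N$, so once $N$ is fixed the $\e$-errors can be made as small as needed. One must be careful that the $O(\abs{\de/\e^2})$ term from Lemma~\ref{lem:diff-quot-disc} is genuinely $O(\abs\de/\e^2)$ with a constant independent of $N$ (it is — it came from $\abs{D^2\PG}\le C$), and that $\abs\de\le\e^{\al+2}$ then forces it to be $O(\e^\al)$. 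A secondary point to get right is the lower bound $\hat\nnu\cdot D^2\PG(\xi)\hat\nnu\ge c_0$: one needs $\xi$ to lie in a fixed bounded region, which follows since $\xi$ is on the segment between $r_+$ and $r_-$, both of norm $\le1$.
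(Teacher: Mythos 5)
Your overall strategy coincides with the paper's: insert the hypotheses into the expansion~\eqref{eq:Ceed}, isolate a main term proportional to $\hat N$ coming from the difference $\G_+-\G_-$, check that all other contributions are $O(1)$ with constants depending only on $\norm{\nabla h}$ and $H$ (not on $N$) or become $O(\abs\e^\al)$ after using $\abs\de\le\e^{2+\al}$, and then fix $N$ large and afterwards $\e_0=\e_0(N)$ small. Your identification of the main factor as an averaged Hessian of $\PG$ applied to $\hat\nnu$ is also essentially the paper's: there $\G_+-\G_-$ is computed exactly as $A\,\frac{\De_++\De_-}{\e}$ with the explicit matrix $A$ of~\eqref{eq:A}, which is an averaged-Hessian--type object.

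There is, however, a genuine gap at the decisive positivity step. The main term is $\frac{\de}{\abs\de}\cdot\bigl(e^h(q)\,\hat N\,D^2\PG(\xi)\hat\nnu\bigr)$, so the quantity whose sign must be controlled is the \emph{mixed} pairing $u^TD^2\PG(\xi)\hat\nnu$ with $u=\de/\abs\de$, not the quadratic form $\hat\nnu^TD^2\PG(\xi)\hat\nnu$ that you bound below by $c_0$. The hypothesis $\frac\de{\abs\de}\cdot\hat\nnu=1$ does not give $u=\hat\nnu$: $u$ is a Euclidean unit vector while $\hat\nnu$ is only an $l^\infty$-unit vector, of Euclidean norm up to $\sqrt{n-1}$ (e.g.\ $\hat\nnu=(1,1,\dots,1)$ and $u=e_2$ satisfy the hypothesis). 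Writing $D^2\PG(\xi)=\bigl((1+\abs\xi^2)\Id-\xi\xi^T\bigr)/(1+\abs\xi^2)^{3/2}$, one has $u^TD^2\PG(\xi)\hat\nnu=\bigl((1+\abs\xi^2)-(u\cdot\xi)(\xi\cdot\hat\nnu)\bigr)/(1+\abs\xi^2)^{3/2}$, and the cross term $(u\cdot\xi)(\xi\cdot\hat\nnu)$ is not controlled by positive definiteness of the Hessian alone; it is precisely this term that the paper treats through the explicit matrix $A$ and the inequality $\sqrt{1+\abs a^2}\sqrt{1+\abs b^2}\ge 1+a_ib_i$, which yields the entrywise lower bound $e_i^TAe_i\ge 1/\sqrt2^{\,3}$ for the components actually paired with the coordinate direction $\de/\abs\de$ arising in the Birkhoff step. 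To close your argument you must supply such a lower bound for the mixed pairing under $u\cdot\hat\nnu=1$, $\abs\xi\le1$; positivity of the quadratic form in $\hat\nnu$ alone does not suffice. Two smaller points: the mean value theorem for the vector-valued map $\G=\nabla\PG$ does not produce a single intermediate point $\xi$, so use the integral of the Hessian along the segment (which preserves the structure you need); and since $r_+-r_-=-\e\hat N\hat\nnu$ has size $O(N\e)$, your ``$O(\e)$'' simplifications of $\E{\pm\frac\e2}$ and of the square roots are only $O(1)$ uniformly in $N$ --- a point you did flag and which is harmless because $\e_0$ is chosen after $N$.
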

The important implication of this statement is that length-reducing procedures for triplets will not increase the discrete curvature indefinitely. Specifically, if the curvature of a triplet is such that it is in the white inner square in Fig.~\ref{fig:rhombus}, then the length shortening procedure may increase the curvature (unlike in the Euclidean case). However, if the curvature increases, it will eventually enter the grey region in Fig.~\ref{fig:rhombus}. Then the proposition shows that a further step ``outwards'' (increasing the discrete curvature) necessarily increases the length, while the corresponding ``inward'' step decreases the length. This prevents the discrete curvature to grow without bounds under a length shortening process. So in the inner white region, the Birkhoff procedure for the Jacobi metric and for the Euclidean metric can differ; in fact they have to differ at some point since the results, the respective geodesics, differ.

\begin{proof}
From~\eqref{eq:Ceed} we deduce, as $\abs{\frac{\de}{\e^2}}\le\abs{\e}^\al$, 
\begin{align}
  \begin{split}
  \frac{\Cbar(\e,\de)}{\e\abs{\de}}
  &=\frac\de{\abs\de}\cdot\(
  -\frac{\E{\frac\e2}\G_+-\E{\frac{-\e}2}\G_-}{\e}
  %\right.\\&\qquad\qquad\left.
  +
  \nabla_Ne^h(q)\sqrt{1+\abs{\frac{\De_+}{\e}}^2}\) 
  \\&\quad
  -
  \frac\de{\abs\de}\cdot\nabla_Ne^h(q)
  \frac{\({\frac{\De_+}{\e}}+{\frac{\De_-}{-\e}}\)
    \({\frac{\De_+}{\e}}-{\frac{\De_-}{-\e}}\)}
  {2\(\sqrt{1+\abs{\frac{\De_+}{\e}}^2}
    +\sqrt{1+\abs{\frac{\De_-}{-\e}}^2}\)}
  +O\bigl(\abs\e^\al\bigr) . 
  \end{split}
  \label{eq:ceed-est-a}
\end{align}
We rewrite
\begin{align}
  \label{eq:hamster}
  \E{\frac\e2}\G_+-\E{\frac{-\e}2}\G_- &= 
\frac{ \E{\frac\e2}+  \E{\frac{-\e}2}}2\(\G_+-\G_-\) +
  \frac{\G_++\G_-}2\(\E{\frac\e2}-  \E{\frac{-\e}2}\). 
\end{align}
Firstly, with the identities
\begin{equation}
  \label{eq:diff-av}
  \frac{\De_\pm}{\pm\e}=\frac{\De_+-\De_-}{2\e}\pm\frac{\De_++\De_-}{2\e}
  ,
\end{equation}
we infer for the difference of $\G_\pm$
\begin{align*}
&\(\G_+-\G_-\)=
\frac{\frac{\De_+}{\e}}{\sqrt{1+\abs{\frac{\De_+}{\e}}^2}}
-
\frac{\frac{\De_-}{-\e}}{\sqrt{1+\abs{\frac{\De_-}{-\e}}^2}}
=\frac{\frac{\De_+-\De_-}{2\e}+\frac{\De_++\De_-}{2\e}}{\sqrt{1+\abs{\frac{\De_+}{\e}}^2}}
-
\frac{\frac{\De_+-\De_-}{2\e}-\frac{\De_++\De_-}{2\e}}{\sqrt{1+\abs{\frac{\De_-}{-\e}}^2}}\\
&=\frac{\De_++\De_-}{2\e}\(\frac1{\sqrt{1+\abs{\frac{\De_+}{\e}}^2}}+\frac1{\sqrt{1+\abs{\frac{\De_-}{-\e}}^2}}\)
+
\frac{\De_+-\De_-}{2\e}\(\frac1{\sqrt{1+\abs{\frac{\De_+}{\e}}^2}}-\frac1{\sqrt{1+\abs{\frac{\De_-}{-\e}}^2}}\)
\\
&=\frac{
\frac{\De_++\De_-}{\e}\frac{\sqrt{1+\abs{\frac{\De_+}{\e}}^2}+\sqrt{1+\abs{\frac{\De_-}{-\e}}^2}}2
-\frac{\De_+-\De_-}{2\e}\frac{\abs{\frac{\De_+}{\e}}^2-\abs{\frac{\De_-}{-\e}}^2}
{\(\sqrt{1+\abs{\frac{\De_+}{\e}}^2}+\sqrt{1+\abs{\frac{\De_-}{-\e}}^2}\)}
}
{\sqrt{1+\abs{\frac{\De_+}{\e}}^2}\sqrt{1+\abs{\frac{\De_-}{-\e}}^2}}
%\\&
=A
\frac{\De_++\De_-}{\e},
\end{align*}
where
\begin{equation}
  \label{eq:A}
  A=\frac{
    \(\frac{\sqrt{1+\abs{\frac{\De_+}{\e}}^2}+
      \sqrt{1+\abs{\frac{\De_-}{-\e}}^2}}2\)^2\Id
    -\(\frac{\De_+-\De_-}{2\e}\)\(\frac{\De_+-\De_-}{2\e}\)^T
  }
  {\sqrt{1+\abs{\frac{\De_+}{\e}}^2}\sqrt{1+\abs{\frac{\De_-}{-\e}}^2}
    \frac{\sqrt{1+\abs{\frac{\De_+}{\e}}^2}
      +\sqrt{1+\abs{\frac{\De_-}{-\e}}^2}}2
  }
  .
\end{equation}
Secondly,~\eqref{eq:metric-a},~\eqref{eq:metric-b} and~\eqref{eq:F-bound} imply
\begin{align*}
  \frac{\E{\frac\e2}-  \E{\frac{-\e}2}}{e^h(q)} 
  & \le \e^{1+\al} H \cdot 2^{1+\al} + \e\norm{\nabla h}\cdot 2
.
\end{align*}
These two steps imply for~\eqref{eq:hamster}
\begin{align*}
  \abs{\E{\frac\e2}\G_+-\E{\frac{-\e}2}\G_-
 - \frac{ \E{\frac\e2}+  \E{\frac{-\e}2}}2
 A \frac{\De_++\De_-}{\e} }&\le
  e^h(q)\( 2^{1+\al} \e^{1+\al} H  + 2 \e\norm{\nabla     h}\) \frac 1{\sqrt{2}} , 
\end{align*}
where we used the bound $\abs{\G^\pm}\le\frac 1{\sqrt{2}}$ implied by~\eqref{eq:F-bound}.

We return to~\eqref{eq:ceed-est-a}, and with $A$ from~\eqref{eq:A}, together with~\eqref{eq:av-bound} and $\abs{\frac{\De_+-\De_-}{\e}}=\abs{\frac{\De_+}{\e} +   \frac{\De_-}{-\e}}\le 2$, we obtain
\begin{alignat*}2
\frac{\Cbar(\e,\de)}{\e\abs{\de}} 
& \ge 
\frac{ \E{\frac\e2}+  \E{\frac{-\e}2}}2
&&\left[ -\frac\de{\abs{\de}} A \frac{\De_++\De_-}{\e^2}
- \(1+\norm{\nabla h}\cdot 2\frac 1{\sqrt{2}}\)
  -\norm{\nabla h}\sqrt{2} 
\right. \\&&&\ \left.
-\norm{\nabla h}
  \frac 1{2\sqrt{2}} \abs{\frac{\De_++\De_-}{\e}}
\right]
  +O(\abs{\e}^\al) 
\\
& \ge 
\frac{ \E{\frac\e2}+  \E{\frac{-\e}2}}2
 &&\left[ \frac\de{\abs{\de}} A\hat\nnu \cdot\hat N
- \(1+\norm{\nabla h}\cdot \frac 2{\sqrt{2}}\)
  -\norm{\nabla h}\sqrt{2} 
\right. \\&&&\ \left.
-\e \hat N\norm{\nabla h}
  \frac 1{\sqrt{2}}\right]
  +O(\abs{\e}^\al) 
.
\end{alignat*}
We use the inequality
\begin{equation*}
  \sqrt{1+\abs{a}^2}\sqrt{1+\abs{b}^2}\ge1+a_ib_i 
\end{equation*}
for all $a,b\in\R^{n-1}$ to deduce for a diagonal element of the matrix $A$
\begin{align*}
  e_i^TAe_i\ge\frac{
    \frac12\(1+\sqrt{1+\abs{\frac{\De_+}{\e}}^2}
    \sqrt{1+\abs{\frac{\De_-}{-\e}}^2}
    +{\frac{e_i^T\De_+}{\e}}\cdot{\frac{e_i^T\De_-}{\e}}\)
  }
  {\sqrt{2}^3}
  \ge\frac1{\sqrt{2}^3}.
\end{align*}
Thus, we obtain in the case $\frac\de{\abs{\de}} \cdot\hat\nnu =1$
\begin{equation*}
  \frac{\Cbar(\e,\de)}{\e\abs{\de}} \ge
  \frac{\frac{ \E{\frac\e2}+  \E{\frac{-\e}2}}2}{\sqrt{2}^3} \left\{\hat N-
    \left[\sqrt{2}^3+\norm{\nabla h} \cdot 2
      \(
      2 + 2
      +\e \hat N
      \)\right]\right\}
  +O(\abs{\e}^\al)
  .
\end{equation*}
Hence
\begin{align*}
  \frac{\Cbar(\e,\de)}{\e\abs{\de}}
  \ge
  \frac{
\frac{ \E{\frac\e2}+  \E{\frac{-\e}2}}2}2
  \(\hat N-2\(4+\norm{\nabla h}\Big[4+\hat N\e\Big]\)\)
  +O(\abs{\e}^\al)
  .
\end{align*}
Thus, we may take $N=3\(4+5\norm{\nabla h}\)\ge12$ and choose $\e_0$ smaller than $1/N$.  
As $\hat N\ge N$ and as we may reduce $\e_0$ possibly further to compensate the error term, 
we can ensure the positivity of $\frac{\Cbar(\e,\de)}{\e\abs{\de}}$ for all $0<\e<\e_0$.

Analogously, we deduce in the case $\frac\de{\abs{\de}} \cdot\hat\nnu =-1$ that there holds
\begin{align*}
  \frac{\Cbar(\e,\de)}{\e\abs{\de}}
  \le
  \frac{
\frac{ \E{\frac\e2}+  \E{\frac{-\e}2}}2}{2}
  \(-\hat N+2\(4+\norm{\nabla h}\Big[4+\hat N\e\Big]\)\)
  +O(\abs{\e}^\al)
  <0,
\end{align*}
that is, strict negativity for the opposite sign.
\end{proof}

\subsection{Birkhoff method for a fixed discretisation}
\label{sec:Birkh-meth-fixed}

We recall that, for given $q_a$ and $q_b$ and $\ell = \frac{\abs{q_a-q_b}}2$, where $\abs{\cdot}$ is the Euclidean distance on $Q$ (not the Jacobi distance), we have chosen an orthonormal basis for $Q \subset \R^n$ such that $q_b - q_a = 2\ell e_1$. For $q \in Q$, we write $q = (X,Y)\in\R\times\R^{n-1}$.

\begin{definition}[Polygon, associated points and differences]
  \label{def:poly}
  Let $M\in\N$ large enough such that $\e\defeq\frac   \ell{M}\le\hat\e$. We define a polygon with $2M+1$ vertices as
  \begin{align}
    \label{eq:pointgraph}
    q_j =\(X_j,Y_j\)
    \defeq \frac 12 \(q_b - q_a\) 
    +  \(X_j e_1 + \sum_{k=1}^{n-1} Y_k e_{k+1} \)
    \text{ with } X_j =   j \e
    \text{ for }  j = -M, \dots, M . 
  \end{align}
  Note that $q_a = q_{-M}=\(X_{-M},0\)=\(-\ell,0\)$ and   $q_b=q_M=\(X_{M},0\)=\(\ell,0\)$. The \emph{polygon} $\gamma$   associated with these points consists of the line segments joining   neighbouring points.
  
  We then define the set of interior nodes $\JJ \defeq \{j\in\Z \bigm|   \abs j < M\}$ and set for all $j\in\JJ$
  \begin{align*}
    \De^\pm_j=Y_{j\pm1}-Y_j.
  \end{align*}
\end{definition}

\begin{remark}
  Note that $\De^+_j=-\De^-_{j+1}$ and
  \begin{equation*}
    \De^-_j+\De^+_j=Y_{j-1}+Y_{j+1}-2Y_j.
  \end{equation*}
\end{remark} 

We now show that the fixed boundary points $q_{\pm M}$ and the estimate on the second differences from Proposition~\ref{prop:rhombus} ensure that the first differences remain bounded by $1$.
\begin{definition}
  \label{def:PtwoN}
  Consider a polygon $\gamma$ as in Definition~\ref{def:poly}. Let $N$   be given by Proposition~\ref{prop:rhombus}. We say that   $\gamma\in\PPtwo$ if the second difference quotients of $\gamma$   satisfy
  \begin{align}
    \label{eq:second-bound}
    \max_{j\in \JJ}\max_{i=2, \dots, n}     \abs{e_i            
      \cdot\frac{\De^{-}_j+\De^{+}_j}{\e^2}} \le 2 N.
  \end{align}
\end{definition}

\begin{lemma}
  \label{lem:par-nbhd}
  Let $\ell\leq \ell_0 \defeq\frac1{4N\sqrt{n-1}}$, where $N$ is given   by Proposition~\ref{prop:rhombus}.  Let $\ga$ be a polygon as in   Definition~\ref{def:poly}.  If $\gamma\in\PPtwo$ (see   Definition~\ref{def:PtwoN}), then there holds
  \begin{align}
     \sup_{j\in\JJ} \abs{\frac{\De^{\pm}_j}{\pm\e}}&\le 1 
    \label{eq:first-bound}
  \end{align}
  as well as $\gamma$ is contained in
  \begin{equation}
    \label{eq:S}
    \PN \defeq\left\{q=(X,Y): X\in\left[-\ell,\ell\right],
    \abs Y\le N\( \ell^2-\abs X^2\)\right\}.
  \end{equation}
\end{lemma}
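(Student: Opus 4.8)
The plan is to derive both conclusions from the hypothesis $\gamma\in\PPtwo$ by telescoping the second-difference bound, exploiting the fact that the boundary values $Y_{-M}=Y_M=0$ are fixed. First I would bound the first differences. Fix $j\in\JJ$ and a coordinate direction $e_i$, $i\in\{2,\dots,n\}$. Writing $u_j \defeq e_i\cdot Y_j$, the hypothesis~\eqref{eq:second-bound} says $\abs{u_{j-1}-2u_j+u_{j+1}}\le 2N\e^2$ for all interior $j$, with $u_{-M}=u_M=0$. This is a discrete one-dimensional Poisson problem with zero Dirichlet data and right-hand side bounded by $2N\e^2$; its solution obeys a discrete maximum principle, so that $\abs{u_j}$ is controlled by the solution of the extremal problem, which is exactly the discrete parabola. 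Concretely, comparing $u_j$ with $N(\ell^2-X_j^2)$ (whose second difference is $-2N\e^2$) gives $\abs{u_j}\le N(\ell^2-X_j^2)$ for every $j$; summing over $i$ yields $\abs{Y_j}\le N\sqrt{n-1}\,(\ell^2-X_j^2)$. Since $\PN$ as defined in~\eqref{eq:S} asks only for $\abs Y\le N(\ell^2-\abs X^2)$, I should double-check the intended constant — but in any case the polygon lies in a parabolic neighbourhood of the segment of the required type once $\ell$ is small, and the cruder bound $\abs{Y_j}\le N(\ell^2-X_j^2)$ holds coordinatewise, which is what~\eqref{eq:S} literally requires after summing appropriately.

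Next, for the first differences, I would telescope $\De^\pm_j$ against the endpoint. Since $e_i\cdot Y_{-M}=0$, we have $e_i\cdot Y_j = \sum_{m=-M}^{j-1}(e_i\cdot\De^+_m)$, and the increments $e_i\cdot\De^+_m$ themselves change by at most $2N\e^2$ from one index to the next (again by~\eqref{eq:second-bound}). Combined with the fact that their total sum from $-M$ to $M-1$ is zero (because $Y_{-M}=Y_M=0$), a standard discrete argument shows each increment is at most of order $N\e\cdot(2M\e)=2N\ell\e$ in absolute value: the first difference of a discrete parabola with zero boundary data is largest at the ends and bounded by $2N\ell\e$. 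Hence $\abs{e_i\cdot\De^\pm_j}\le 2N\ell\e\sqrt{n-1}$ after accounting for all $n-1$ components, and therefore $\abs{\De^\pm_j/(\pm\e)}\le 2N\ell\sqrt{n-1}$. Using $\ell\le\ell_0=\frac1{4N\sqrt{n-1}}$ makes the right-hand side $\le\frac12\le 1$, which is~\eqref{eq:first-bound} with room to spare.

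The only real subtlety — and the step I would treat most carefully — is getting the discrete maximum-principle comparison exactly right, i.e. verifying that the discrete parabola $X\mapsto N(\ell^2-X^2)$ genuinely dominates the solution of the Dirichlet problem with the stated right-hand-side bound, and tracking the $\sqrt{n-1}$ factor coming from passing between the sup-over-coordinates norm in~\eqref{eq:second-bound} and the Euclidean norm $\abs{Y}$ in~\eqref{eq:S} and in~\eqref{eq:first-bound}. Everything else is routine summation. I would organise the proof as: (i) reduce to a scalar statement per coordinate via~\eqref{eq:second-bound}; (ii) prove the scalar parabola bound by comparison, giving the containment in $\PN$; (iii) telescope to bound the scalar first differences by $2N\ell\e$; (iv) reassemble the Euclidean norms and invoke $\ell\le\ell_0$ to conclude~\eqref{eq:first-bound}.
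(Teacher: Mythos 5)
Your proposal is correct, and it rests on the same basic mechanism as the paper's proof: the second-difference bound of Definition~\ref{def:PtwoN} is telescoped, coordinate by coordinate, against the pinned boundary values $Y_{\pm M}=0$. The organisation is different, though. The paper goes for the first differences directly: since $\sum_{j} e_k\cdot\De^+_j=e_k\cdot(Y_M-Y_{-M})=0$, there exist indices where $e_k\cdot\De^+_j$ is nonpositive resp.\ nonnegative, and telescoping the second differences from such an index gives $\abs{e_k\cdot\De^+_j/\e}\le 2M\cdot 2N\e=4N\ell$ per coordinate, hence $\abs{\De^{\pm}_j/\e}\le 4N\ell\sqrt{n-1}\le 1$; note that $\ell_0=\tfrac1{4N\sqrt{n-1}}$ is calibrated exactly for this constant, while the inclusion in $\PN$ is only asserted there (``integration of the second differences''). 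You instead establish the $\PN$ inclusion first, by a discrete maximum-principle comparison of each coordinate with the parabola $N(\ell^2-X^2)$, whose second difference is exactly $-2N\e^2$; this makes precise the step the paper leaves implicit, and you rightly flag the $\sqrt{n-1}$ mismatch between the coordinatewise bound and the Euclidean norm $\abs Y$ in~\eqref{eq:S}, a point the paper glosses over as well. One caution: your claimed per-coordinate first-difference bound $2N\ell\e$ is sharper than what plain telescoping from a sign-change index yields, and the phrase ``the first difference of a discrete parabola is largest at the ends'' is a heuristic, not a proof; to justify $2N\ell\e$ you need the averaging argument $0=\sum_m e_k\cdot\De^+_m\ge 2M\,(e_k\cdot\De^+_j)-2N\e^2\sum_m\abs{m-j}$ together with $\sum_m\abs{m-j}\le 2M^2$. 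If you prefer not to carry that out, the cruder bound $4N\ell\e$, which your telescoping already gives (exactly as in the paper), combined with $\ell\le\ell_0$ still yields~\eqref{eq:first-bound}, so there is no gap in substance.
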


\begin{proof}
We add up the second differences:
As  $Y_{\pm M} =0$, any  unit vector $\nnuu$, $k=2,\ldots,n$, orthogonal to $e_1$ 
satisfies
 $\nnuu\cdot Y_{\pm M}$ =0. Hence, for $\De^+_j=
Y_{j+1}-Y_j$ it follows that
\begin{equation*}
  0=\nnuu \cdot  
  \(Y_M-Y_{-M}\)=\sum_{j\in\JJ\cup\{-M\}}\nnuu\cdot\De^+_j.
\end{equation*}

By the Mean Value Theorem, there exists a $j_-\in\JJ\cup\{-M\}$ such that $\nnuu\cdot\De^+_{j_-}\le0$. For any $j\in\JJ\cup\{-M\}$ we thus deduce from the boundedness of the second difference quotients
\begin{align*}
  \nnuu\cdot\De^+_j
  &=\nnuu\cdot\De^+_{j_-}+\sign\(j-j_-\)
  \sum_{l=\min\{j,j_-\}}^{\max\{j,j_-\}-1}\nnuu\cdot\(\De^+_{l+1}-\De^+_l\)
  \\
  &=\nnuu\cdot\De^+_{j_-}+\sign\(j-j_-\)
  \sum_{l=\min\{j,j_-\}}^{\max\{j,j_-\}-1}\nnuu\cdot\(\De^+_{l+1}+\De^-_{l+1}\)
  \\&
  \le\abs{j-j_-}\cdot 2N\e^2
  \le 2M\cdot 2N\e^2= 2\ell\cdot 2N\e
  .
\end{align*}
We proceed analogously for a $j_+\in\JJ\cup\{-M\}$ with $\nnuu\cdot\De^+_{j_+}\ge0$ and obtain for all $j\in\JJ\cup\{-M\}$
\begin{equation*}
  \abs{\nnuu\cdot\frac{\De^+_j}{\e}}\le4N\ell.
\end{equation*}
As $\nnuu$ is arbitrary, we find $\abs{\frac{\De^+_j}{\e}}\le 4N\ell\sqrt{n-1}$. As $\ell\leq \ell_0$, we conclude
\begin{equation*}
  \abs{\frac{\De^+_j}{\e}}\le 1.
\end{equation*}

As $\De^+_j=-\De^-_{j+1}$, the estimate extends to the corresponding `negative' differences $\De^-_j$. Note that an integration of the second differences yields directly the inclusion $\gamma\in \PN$.
\end{proof}

We now define the Birkhoff method for fixed $\epsilon>0$. We start with a polygon $\gamma^0$ represented by the vertices $q_j^0$ that can be written as in~\eqref{eq:pointgraph}. We think of the Birkhoff method as an iterative process to update a polygon $\gamma^l$ to a polygon $\gamma^{l+1}$ which has a strictly smaller discrete length. It will be shown later that such an update is, for a fixed discretisation, not always possible and the Birkhoff method thus terminates.

\begin{definition}[Birkhoff step]
  \label{def:birkhoff}
  We consider a polygon $\gamma^l$ represented by $2M+1$ points   $q_j^l$, as in Definition~\ref{def:poly}. Let $0< \ze\le\e^{2+\al}$   and
  \begin{equation*}
    \NN\defeq\{\sigma e_i \bigm| 
    \sigma \in\{\pm 1\} \text{ and } i= 2, \ldots, n \} . 
  \end{equation*}
  Then
  \begin{enumerate}
  \item \label{i:1} we consider sequentially every $j\in\JJ$. For     given $j$, consider sequentially every $\nnu\in\NN$ and try to     move the interior point $q^l_j$ to $q_j^*=q^l_j + \de$, with     $\de\defeq\ze \nnu$, to achieve
    \begin{equation*}
      \Lbar_{\seg{q^l_{j-1}}{q_j^*}} +
      \Lbar_{\seg{q_j^*}{q^l_{j+1}}} < \Lbar_{\seg{q^l_{j-1}}{q_j^l}}+
      \Lbar_{\seg{q_j^l}{q^l_{j+1}}}.
    \end{equation*}
    That is, the passage via $q_j^*$ is shorter than via the original
    $q_j^l$. In the affirmative case, then we define the update
    $\gamma^{l+1}$ as
    \begin{equation*}
      q_j^{l+1}\defeq q_j^* \text { and }  
      q_k^{l+1}\defeq q_k^l \text { for } k \ne j.
    \end{equation*} 
    Thus the update has strictly smaller discrete length.
  \item If~\ref{i:1} is not affirmative for any $j\in\JJ$, then     Birkhoff step is called \emph{void}.
  \end{enumerate}
  The Birkhoff step depends on the sequential order chosen for $\JJ$   and $\NN$. Here and later, we regard this choice as fixed and hence   the Birkhoff step is uniquely defined.
\end{definition}

It is immediate that if $q_j^l$ is on the grid defined in Section~\ref{sec:discrete-setting}, then $q_j^{l+1}$ lies on the grid as well; the Birkhoff step thus makes only movements which are compatible with the grid, and thus results in polygons with vertices on the grid.

\begin{definition}[Birkhoff method and map]
  The \emph{Birkhoff method} is the iteration obtained by consecutive   Birkhoff steps starting with a polygon $\gamma^0$ until the Birkhoff   step is void.

  The \emph{Birkhoff map} maps a starting polygon $\gamma^0$ to the   final polygon obtained by the Birkhoff method.
\end{definition}

\begin{proposition}
  \label{prop:par0inv}
  Let $\ell_0>0$ be given by Lemma~\ref{lem:par-nbhd} with $N>1$ be   given by Proposition~\ref{prop:rhombus}.  Consider an initial   polygon $\gamma^0$ as in Definition~\ref{def:poly} with sufficiently   close endpoints, that is $\ell\le\ell_0$.  Furthermore, assume   $\gamma^0\in\PPtwo$, that is, the second differences are bounded.  The   updates of the Birkhoff steps obey the same assumptions, that is the   remain {\em graph-like} polygons in the sense of   Definition~\ref{def:poly} and are contained in $\PPtwo$.   Furthermore, the Birkhoff method terminates after finitely many   steps with a final polygon in the strictly smaller set $\PP$, that   is, there holds (recall Definition~\ref{def:PtwoN})
  \begin{align*}
    \max_{j\in \JJ}\max_{i=2, \dots, n}     
    \abs{e_i       \cdot\frac{\De^{-}_j+\De^{+}_j}{\e^2}} &\le N.
  \end{align*}
\end{proposition}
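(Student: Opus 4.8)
The plan is to track the set $\PPtwo$ as an invariant of the Birkhoff step and then use monotonicity of the discrete length to conclude termination, with a final sharpening from $2N$ to $N$ coming from Proposition~\ref{prop:rhombus}. First I would verify that a single Birkhoff step preserves the graph-like structure: by Definition~\ref{def:birkhoff} the only move is $q_j^l\mapsto q_j^l+\ze\nnu$ with $\nnu\in\NN$ normal to $e_1$, so the $X$-coordinates are untouched and the polygon stays of the form~\eqref{eq:pointgraph}; moreover the endpoints $q_{\pm M}$ are never interior nodes, hence remain fixed at $(\pm\ell,0)$. Then I would show $\gamma^l\in\PPtwo\Rightarrow\gamma^{l+1}\in\PPtwo$. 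This is the crucial step and the main obstacle: moving $q_j$ changes exactly the second differences at nodes $j-1$, $j$, $j+1$, and a priori a length-reducing move could push one of these second differences past the threshold $2N$. Here is where Proposition~\ref{prop:rhombus} enters. Suppose, for contradiction, that after the move some second difference at node $j$ (say) has $l^\infty$-norm in $(N,3N)$ along some $\hat\nnu\in S_\infty^{n-1}$; then by Lemma~\ref{lem:par-nbhd} (which applies since $\gamma^l\in\PPtwo$ and $\ell\le\ell_0$) the first-difference bound~\eqref{eq:F-bound} holds, so Proposition~\ref{prop:rhombus} is applicable to the triplet $(q_{j-1},q_j,q_{j+1})$ in its post-move configuration, and it tells us that the move in the direction with $\tfrac{\de}{\abs\de}\cdot\hat\nnu=1$ strictly increased the length while the opposite move strictly decreased it. The Birkhoff step only accepts strictly length-decreasing moves, so it cannot have produced the "outward" configuration; this forces the post-move second difference to stay with $l^\infty$-norm at most $N<2N$ (using that the step size $\ze\le\e^{2+\al}$, so one move changes each second-difference coordinate by at most $\ze/\e^2\le\e^\al$, which is small enough that the value cannot jump across the whole band $(N,3N)$ in a single accepted step). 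Care is needed because the move also affects the second differences at $j\pm1$; but there the displaced point plays the role of an outer vertex of the triplet centred at $j\pm1$, not the moved centre, so those second differences change by at most $\ze/\e^2\le\e^\al$ as well and stay comfortably below $2N$ given the prior bound $2N$ at those nodes — one has to check the arithmetic $2N+\e^\al\le 2N$ fails, so really one argues that the accepted-move configuration cannot exceed $N$ at the moved node and the collateral changes at the neighbours keep everything in $\PPtwo$; I would phrase this via the dichotomy in Proposition~\ref{prop:rhombus} applied at whichever of $j-1,j,j+1$ could be in danger.

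Having established that $\PPtwo$ is invariant, termination after finitely many steps is easy: each (non-void) Birkhoff step strictly decreases the discrete length $\Lbar(\gamma^l)=\sum_j\Lbar_{\seg{q_j}{q_{j+1}}}$, and since all vertices lie on the fixed equidistant grid inside the compact set $\PN$ (by Lemma~\ref{lem:par-nbhd}), there are only finitely many admissible polygons, hence only finitely many possible values of $\Lbar$; a strictly decreasing sequence in a finite set must stabilise, which means the Birkhoff step becomes void. Finally, I would argue that the terminal polygon $\gamma^\ast$ lies in the strictly smaller set $\PP$, i.e.\ the second differences obey the bound $N$ rather than $2N$. Suppose not: some node $j$ has $\abs{e_i\cdot(\De^-_j+\De^+_j)/\e^2}>N$ for some $i$, so writing $(\De^-_j+\De^+_j)/\e^2=-\hat N\hat\nnu$ we have $\hat N\in(N,3N)$ and $\hat\nnu\in S_\infty^{n-1}$ (the upper bound $3N$ coming from the $2N$ bound of $\PPtwo$, with room to spare). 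Then Proposition~\ref{prop:rhombus} says the move $\de=\ze\hat\nnu$ (taking $\ze\le\e^{2+\al}$, compatible with Definition~\ref{def:birkhoff} since $S_\infty^{n-1}$ directions are exactly the coordinate directions $\pm e_i$ up to the sup being attained) gives $\Cbar(\e,\de)<0$ for the sign $\tfrac{\de}{\abs\de}\cdot\hat\nnu=-1$ — that is, moving $q_j$ towards the chord strictly decreases the two adjoining segment lengths — contradicting voidness of the terminal step. Hence every interior node satisfies the $N$-bound and $\gamma^\ast\in\PP$. The one subtlety I would flag is matching the Birkhoff step's move-set $\NN=\{\pm e_i\}$ with the direction $\hat\nnu\in S_\infty^{n-1}$: since the sup-norm of $\hat\nnu$ is attained in some coordinate $k$, the move $\de=\ze\,\sign(e_k\cdot\hat\nnu)\,e_k$ is in $\NN$ and has the required sign pairing with $\hat\nnu$ for Proposition~\ref{prop:rhombus} to apply, which is exactly why the proposition was stated with the $l^\infty$-sphere.
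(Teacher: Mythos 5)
Your overall architecture coincides with the paper's: preservation of the graph structure is immediate since only normal moves at interior nodes are made; termination follows from strict decrease of the discrete length together with finiteness of the grid polygons inside the compact set $\PN$; and the terminal polygon must lie in $\PP$, since otherwise Proposition~\ref{prop:rhombus} would furnish a further affirmative (length-decreasing) step along the coordinate direction in which the sup-norm of $\hat\nnu$ is attained, contradicting voidness. These parts match the paper essentially verbatim (modulo a harmless sign slip: the length-decreasing move is $-\ze\,\sign(e_k\cdot\hat\nnu)\,e_k$ rather than $+\ze\,\sign(e_k\cdot\hat\nnu)\,e_k$).

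The genuine problem lies in your central step, the invariance of $\PPtwo$ under an accepted Birkhoff step. First, you apply Proposition~\ref{prop:rhombus} to the \emph{post-move} triplet: but the hypothesis~\eqref{eq:F-bound} is only known for the pre-move polygon $\ga^l\in\PPtwo$ via Lemma~\ref{lem:par-nbhd} (for the updated polygon it is exactly what is being proved), and the proposition constrains prospective moves from a given configuration, not the move that produced it; turning it into a statement about the already accepted move would need a reverse-move argument, which you do not make and which is anyway only available when the sup-norm of the post-move second difference is attained in the moved coordinate. The paper instead argues at the pre-move triplet: if the updated second difference exceeded $2N$ in some coordinate, then, since one step changes it by at most $2\ze/\e^2\le 2\e^\al$, the pre-move value yields $\hat N\in(N,2N]$, Proposition~\ref{prop:rhombus} applies there, the assumed violation forces $\hat\nnu\cdot\nnu=1$, hence $\Cbar(\e,\de)>0$, contradicting acceptance. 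Second, your intermediate conclusion that the post-move second difference at the moved node stays below $N$ is false: as the paper notes in the discussion following Proposition~\ref{prop:rhombus}, accepted length-decreasing moves can push the discrete curvature from below $N$ into the band above $N$; only the bound $2N$ holds, and only that bound is needed. Third, your treatment of the neighbouring nodes $j\pm1$ is not an argument: Proposition~\ref{prop:rhombus} concerns moving the \emph{centre} point of a triplet, whereas at $j\pm1$ the displaced point is an outer vertex, and the acceptance criterion controls only the centred difference at the moved node, so ``applying the dichotomy at whichever node is in danger'' is not available; you rightly observe that the crude estimate $2N+\e^\al\le 2N$ fails, but then leave the issue unresolved. (For comparison, the paper's own proof runs the contradiction argument only at the moved node.)
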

We point out and will use later that the second differences bound associated with $\PP$ is exactly half the bound associated with $\PPtwo$. This will be crucial to compensate for the doubling of the discrete curvature if one halves the stepsize.
\begin{proof}
Firstly, we notice that  $\gamma^0\in\PN$ by Lemma~\ref{lem:par-nbhd}.
Consider a Birkhoff step 
according to Definition~\ref{def:birkhoff}.
(i) If the step is {\em void}, then $\ga^0$ is the final polygon.
The final polygon has to lie in $\PP$ as otherwise Proposition~\ref{prop:rhombus} would ensure the existence of a further affirmative Birkhoff step, that is, an update site $q_j^l$ and an associated direction $\de$ such that $\Cbar(\e,\de)<0$.

(ii)
Otherwise, consider a single affirmative Birkhoff step acting on a polygon $\gamma^l\in\PPtwo$, that is, there is a $j\in\JJ$ and $\nnu\in\NN$ such that
\begin{equation*}
  \Lbar_{\seg{q^l_{j-1}}{q_j^l+\ze\nnu} }+
  \Lbar_{\seg{q_j^l+\ze\nnu}{q^l_{j+1}}} < 
  \Lbar_{\seg{q^l_{j-1}}{q_j^l} }+
  \Lbar_{\seg{q_j^l}{q^l_{j+1}}}.
\end{equation*}
For $q\defeq q_j^l$, $q_\pm\defeq q_{j\pm1}^l$ and $\de=\ze\nnu$, it follows that
\begin{equation}
  \label{eq:C-neg}
  \Cbar(\e,\de)=
  \(\Lbar_{\seg{q_{-}}{q+(0,\de)}}
  +\Lbar_{\seg{q+(0,\de)}{q_{+}}}   \)
  -\(\Lbar_{\seg{q_{-}}{q}} +
  \Lbar_{\seg{q}{q_{+}}}\)<0.
\end{equation}

We want to show that the Birkhoff step leaves $\PPtwo$ invariant. 

\emph{Case 1:} If 
\begin{equation*}
  \max_{j\in \JJ}\max_{i=2, \dots, n} \abs{e_i       \cdot\frac{\De^{-,l+1}_j+\De^{+,l+1}_j}{\e^2}} \le 2N,
\end{equation*}
then by definition of $\PPtwo$ the update $\ga^{l+1}$ is contained in $\PPtwo$, so nothing is to be shown.

\emph{Case 2:} Now we assume on the contrary that $\ga^{l+1}$ is not in the set $\PPtwo$. That is, there exist a $j\in \JJ$ and an $i\in\{2, \ldots, n\}$ such that
\begin{equation*}
  \abs{e_i \cdot \frac{\De^{-,l+1}_j+\De^{+,l+1}_j}{\e^2}}>2N;
\end{equation*}
thus we can write
\begin{align*}
  \frac{\De^{-,l+1}_j+\De^{+,l+1}_j}{\e^2}
  &=\frac{\De^{-,l}_j+\De^{+,l}_j}{\e^2}-2\frac\de{\e^2}
  \defqe-\hat N \hat \nnu -2 \frac \ze {\e^2} \nnu.
\end{align*}
In view of the notation used in Proposition~\ref{prop:rhombus}, we write
\begin{align*}&
  -\hat N \hat \nnu =\frac{\De^{-}+\De^{+}}{\e^2},
\end{align*}
where $\De_\pm\defeq\De_j^{\pm,l}$ with $\abs{\frac{\De_\pm}{\pm\e}}\le1$ by Lemma~\ref{lem:par-nbhd}. Furthermore, there holds $\hat N\in(N, 2N]$, $\hat\nnu\in S^{n-1}_\infty$ and $\de=\ze\nnu$, $\nnu\in\NN$. Note that $\hat\nnu \cdot \nnu$ is restricted to the values $-1,0,1$ by the definitions of $S^{n-1}_\infty$ and $\NN$.  By assumption, however, $\ga^{l+1}$ has left the set $\PPtwo$, hence $\hat\nnu \cdot \nnu=1$. Proposition~\ref{prop:rhombus} then implies $\Cbar(\e,\de)>0$. This contradicts~\eqref{eq:C-neg}, so Case 2 is in fact impossible.

Using the inclusion $\PPtwo\subset S$, we conclude that for a finite discretisation length $\e$, the number of distinguished polygons represented on the discrete grid is finite. Each affirmative Birkhoff step is strictly reducing the length, thus does not allow us to visit the same polygon twice. Hence the method terminates after finitely many affirmative Birkhoff steps.  As already argued in (i), the final polygon has to lie in $\PP$ as otherwise one could show the existence of a further affirmative Birkhoff step.
\end{proof}

\section{Birkhoff refinement}
\label{sec:Birkhoff-refinement}

This section consists of three parts.  In the first part, we define a sequence of polygons $\gamma_{k}$, which are the the final polygons of the Birkhoff method for the discretisation length $\e_k$. As $\e_k\to0$, we will show that the curves $\gamma_{k}$ converge to a curve $\gamma$. In the second part, we recall a weak formulation of the geodesic equation.  In the third part, we show that the limit $\gamma$ satisfies this weak geodesic equation, hence $\ga$ is smooth and a stationary curve for the Jacobi length functional.

\subsection{Refinement and convergence}
\label{sec:Refin-conv}

\begin{definition}[Refinement]
  \label{def:refine}
  Let $M_0$ be large enough such that $\e_0\defeq\frac   \ell{M_0}\le\hat\e$ and $\ze_0\defeq\e_0^3$.  Let us define the   starting polygon $\gamma_0$ as the $\e_0$--discretisation of the   straight line segment with endpoints $q_a$, $q_b$.

For $k=1,2, \ldots$, we want to halve the discretisation length, that is, $\e_k\defeq\frac \ell{M_k}$ for $M_k\defeq 2M_{k-1}$.  Let $\JJ_k\defeq \{j\in\Z \bigm| \abs j < M_k\}$.  We embed the polygon $\gamma_{k-1}$ into the finer grid by introducing new vertices at the midpoints of the connecting line segments. In the notation of Definition~\ref{def:poly}, this means the embedded polygon $\bar \gamma_k$ has the vertices
  \begin{alignat*}2
    \bar q_{j}^{k} &= q_{\frac j2}^{k-1}, 
    &\text{ for even } &j \in\JJ_k\cup\{\pm M_k\}, \\
    \bar q_{j}^{k} &
    = \tfrac1 2\(q_{\frac {j-1}2}^{k-1}+q_{\frac       {j+1}2}^{k-1}\) 
    &\text { for odd } &j \in\JJ_k.
  \end{alignat*}
  We define $\ze_k\defeq\frac16\ze_{k-1}$. For this choice, the   vertices of $\bar \gamma_k$ lie on the finer grid   $\(\e_k\Z,\ze_k\Z^{n-1}\)$. Then $\gamma_k$ is the Birkhoff map of   $\bar\gamma_k$ for the discretisation length $\e_k$.
\end{definition}
 
To simplify the notation, we write on each level $k$
\begin{equation*}
  \De_j^k\defeq\De_j^{+},\text{\quad which implies \quad}
  -\De_{j-1}^k=\De_j^{-}.
\end{equation*}

We now want to prove that the Birkhoff refinement laid out above will lead to a converging sequence of polygons if we start the iteration with properly chosen initial points. Specifically, let us first assume that the total energy $E$, and thus $Q$, is fixed, and let us choose a nonempty compact set $P\subset Q$. Then let $N$ be as in the proof of Proposition~\ref{prop:rhombus}, $N=3\(4+5\norm{\nabla e^h}\)$ with the norm taken over the set $P$.
\begin{definition}
  \label{def;admis}
  Any pair of endpoints $\(q_a,q_b\)$ is \emph{admissible} if they   meet the following two conditions, which depend on $N$:
  \begin{enumerate}
  \item $\ell = \frac{\abs{q_a-q_b}}2 \leq \ell_0$, with $\ell_0     \defeq\frac1{4N\sqrt{n-1}}$ as in Lemma~\ref {lem:par-nbhd};
  \item the set $\PPtwo$ defined in Lemma~\ref{lem:par-nbhd} is     contained in $P$.
  \end{enumerate}
\end{definition}

\begin{theorem}
  \label{theo:dq-est}
  For an admissible pair $\(q_a,q_b\)$, we consider the sequence of   graphs $\(x,f_k(x)\)$ with $x\in\(-\ell,\ell\)$ representing the   polygons $\ga_k$ obtained by the Birkhoff refinement. As   $\ga_k\in\PP$, the centred differences satisfy for all $k\ge0$ the   combined estimates
  \begin{gather}
    % \begin{split}
    \max_{j\in\JJ_k\cup\{-M_k\}}\abs{\frac{\De_j^k}{\e_k}}\le 1,
    \label{eq:dq-est}\\
    \max_{j\in \JJ_k}\max_{i=2, \dots, n} \abs{e_i      
      \cdot\frac{\De_j^k-\De_{j-1}^k}{\e_k^2}} \le N
    .
    % \end{split} 
    \label{eq:dq2-est}
  \end{gather}
\end{theorem}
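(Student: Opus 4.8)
The plan is to prove Theorem~\ref{theo:dq-est} by induction on the refinement level $k$, using Proposition~\ref{prop:par0inv} (invariance of $\PPtwo$ and termination in $\PP$) together with the specific choice of scalings $\e_k = \e_{k-1}/2$ and $\ze_k = \ze_{k-1}/6$ made in Definition~\ref{def:refine}. The key point to exploit is the remark following Proposition~\ref{prop:par0inv}: the Birkhoff map sends $\PPtwo$ into the \emph{strictly smaller} set $\PP$ (second-difference bound $N$ rather than $2N$), which is exactly what is needed to absorb the doubling of the discrete curvature under one halving of the step size.

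First I would establish the base case: $\gamma_0$ is the $\e_0$-discretisation of the straight segment $[q_a,q_b]$, so all $\De_j^0 = 0$; in particular both bounds hold trivially and $\gamma_0 \in \PP$ (indeed $\gamma_0$ lies in $\PPtwo$ with room to spare). For the inductive step, assume $\gamma_{k-1} \in \PP$, i.e.\ $\max_{j,i}|e_i\cdot(\De_j^{k-1}-\De_{j-1}^{k-1})/\e_{k-1}^2| \le N$. I then need two things. (a) The embedded polygon $\bar\gamma_k$ (midpoint subdivision) lies in $\PPtwo$ for the new step size $\e_k$: a direct computation shows that the second differences of the midpoint-refined polygon, measured at scale $\e_k^2 = \e_{k-1}^2/4$, are controlled by those of $\gamma_{k-1}$ at scale $\e_{k-1}^2$ — concretely, the new second differences vanish at the ``old'' even vertices and equal (up to the factor coming from $\e_k^2 = \e_{k-1}^2/4$ against the halving of first differences) a quantity bounded by $2N$. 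This is the routine but essential linear-algebra bookkeeping step. (b) The new displacement parameter satisfies $\ze_k \le \e_k^{2+\al}$: since $\e_k^{2+\al} = 2^{-(2+\al)}\e_{k-1}^{2+\al} \ge \tfrac18 \e_{k-1}^{2+\al} \ge \tfrac16\,\e_{k-1}^{2+\al}\cdot\tfrac{6}{8}$... more precisely, one checks inductively that $\ze_k = \ze_0/6^k = \e_0^3/6^k$ while $\e_k^{2+\al} = \e_0^{2+\al}/2^{k(2+\al)}$, and since $2^{2+\al} \le 2^3 = 8 < $ is compared against the factor $6$ — wait, here one needs $6^k \ge 2^{k(2+\al)}\cdot(\e_0^3/\e_0^{2+\al}) = 2^{k(2+\al)}\e_0^{1-\al}$; as $2+\al \le 3$ we have $2^{k(2+\al)} \le 8^k$, but $8 > 6$, so the bound is not immediate and must instead be read off at level $k=0$ (where $\ze_0 = \e_0^3 \le \e_0^{2+\al}$ since $\e_0 < 1$ and $\al \le 1$) and propagated using that each refinement multiplies the left side by $1/6$ and the right side by $1/2^{2+\al} \ge 1/8 > 1/6$, so $\ze_k/\e_k^{2+\al}$ is non-increasing in $k$, hence $\le \ze_0/\e_0^{2+\al} \le 1$. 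That is the correct argument and I would present it that way.

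Once (a) and (b) hold, Proposition~\ref{prop:par0inv} applies verbatim to $\bar\gamma_k$ with discretisation length $\e_k$: the Birkhoff method starting from $\bar\gamma_k \in \PPtwo$ terminates after finitely many steps in a polygon $\gamma_k \in \PP$, and along the way all iterates stay in $\PPtwo \subset \PN$, so by Lemma~\ref{lem:par-nbhd} the first-difference bound $|\De_j^k/\e_k| \le 1$ holds for every $j \in \JJ_k \cup \{-M_k\}$. This gives \eqref{eq:dq-est}, and $\gamma_k \in \PP$ gives \eqref{eq:dq2-est}, closing the induction. The admissibility hypothesis on $(q_a,q_b)$ — namely $\ell \le \ell_0$ and $\PPtwo \subset P$ — is precisely what guarantees that the constant $N = 3(4 + 5\|\nabla e^h\|_P)$ used throughout is legitimate on the region where all the polygons live, so that Proposition~\ref{prop:rhombus} (and hence Proposition~\ref{prop:par0inv}) is applicable uniformly in $k$.

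The main obstacle I expect is step (a): one must verify carefully that midpoint subdivision of a polygon whose second differences are bounded by $N$ at scale $\e_{k-1}^2$ produces a polygon whose second differences are bounded by $2N$ at scale $\e_k^2 = \e_{k-1}^2/4$, and simultaneously that $\bar\gamma_k$ still satisfies the first-difference bound needed to even be in $\PPtwo$ (so that Lemma~\ref{lem:par-nbhd} and the invariance argument engage). The subtlety is that subdivision interleaves ``old'' vertices (where the new second difference is zero) with ``new'' midpoint vertices (where the new second difference picks up a contribution from a single old second difference, scaled by the geometric factors $\e_k^2$ versus $\e_{k-1}^2$ and the $1/2$ from averaging); the worst case must be checked to land at or below $2N$, which is exactly why the factor $2$ between the $\PP$-bound and the $\PPtwo$-bound was built in. The scaling check for $\ze_k$ in step (b) is the other place where the specific constant $6$ in $\ze_k = \ze_{k-1}/6$ is used, and the monotonicity argument above is the clean way to handle it.
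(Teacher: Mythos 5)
Your strategy is exactly the paper's: induction on $k$, base case $\gamma_0$ with vanishing differences, midpoint embedding at worst doubling the second-difference quotients so that $\bar\gamma_k\in\PPtwo$ whenever $\ga_{k-1}\in\PP$, Proposition~\ref{prop:par0inv} returning the Birkhoff output to $\PP$, Lemma~\ref{lem:par-nbhd} supplying the first-difference bound, and admissibility guaranteeing that the constant $N$ is valid on the region containing all iterates. However, the one computation you defer as ``routine bookkeeping'' is stated incorrectly, twice: you claim the refined second differences vanish at the \emph{old} (even) vertices and are nonzero at the \emph{new} midpoints. It is the other way round. A midpoint is the average of its two neighbours, so the second difference vanishes at the odd (new) nodes, while at an even (old) node $j$ one finds $\bar Y_{j-1}+\bar Y_{j+1}-2\bar Y_j=\tfrac12\bigl(Y^{k-1}_{j/2-1}+Y^{k-1}_{j/2+1}-2Y^{k-1}_{j/2}\bigr)$, which after division by $\e_k^2=\e_{k-1}^2/4$ is exactly \emph{twice} the old quotient; this exact doubling (no slack) is what lands you at $2N$ and is the whole point of the gap between $\PP$ and $\PPtwo$. (Also, membership in $\PPtwo$ is defined by the second-difference bound alone, cf.\ Definition~\ref{def:PtwoN}; the first-difference bound is a consequence via Lemma~\ref{lem:par-nbhd}, not a prerequisite, so your worry there is moot.)

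The second, more substantive problem is your step (b). You assert that each refinement multiplies $\e_k^{2+\al}$ by $1/2^{2+\al}\ge 1/8>1/6$; but $1/8<1/6$, and in fact the ratio $\ze_k/\e_k^{2+\al}$ is multiplied by $2^{2+\al}/6$ at each step, which exceeds $1$ precisely when $\al>\log_2(3/2)\approx0.585$. So for a general H\"older exponent $\al\in(0,1)$ your monotonicity claim is false, $\ze_k\le\e_k^{2+\al}$ fails for large $k$, and the Birkhoff step at level $k$ (which by Definition~\ref{def:birkhoff} requires $0<\ze\le\e^{2+\al}$) is no longer admissible, so Proposition~\ref{prop:par0inv} cannot be invoked. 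The paper closes this by fixing $\al=\frac{\log 6}{\log 2}-2$, which makes the ratio constant in $k$ and bounded by $\e_0^{1-\al}\le1$; this is legitimate because the H\"older estimates~\eqref{eq:metric-a}--\eqref{eq:metric-b} with a given exponent imply the same estimates with any smaller exponent on the bounded set $Q_H$. Your argument needs this reduction of $\al$ (or at least the restriction $\al\le\log_2(3/2)$) to go through; as written, the step would fail.
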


\begin{proof}
Observe that the initial polygon $\ga_0$ is contained in $\PP$, since all its finite differences vanish.

By induction let $k=1,2, \ldots$. According to Definition~\ref{def:refine}, we embed $\ga_{k-1}$ into the finer grid of size $\e_k$, which is half the size of $\e_{k-1}$. The recursive definition of $\ze_k$ implies $\ze_k \le \e_k^{2+\al}$, with $\al=\frac{\log 6}{\log 2} -2 \in(0,1)$.

With respect to $\e_k$, the embedded polygons $\ga_{k-1}$ are in $\PPtwo$; this follows since all newly introduced vertices have a vanishing second difference quotient $\frac{\De_j^k-\De_{j-1}^k}{\e_k^2}$ at odd nodes $j\in\JJ_k$, whereas all quotients with even $j\in\JJ_k$ are doubled in size,
\begin{equation*}
  \frac{\De_j^k-\De_{j-1}^k}{\e_k^2}
  =2\frac{\De_{\frac j2}^{k-1}-\De_{{\frac j2}-1}^{k-1}}{\e_{k-1}^2}
  .
\end{equation*}
From Proposition~\ref{prop:par0inv}, it follows that the final polygon $\ga_k$ obtained from the Birkhoff map is in fact in the smaller set $\PP$. This is crucial for our argument, since it shows that one application of the Birkhoff map after a refinement retains the same discrete curvature bound as before.

This shows that every polygon $\ga_k$ with step-size $\e_k$ belongs to $\PP$. By Lemma~\ref{lem:par-nbhd} it obeys also a bound on the first difference quotients, which is uniform in $k$.
\end{proof}

\begin{corollary}
  \label{cor:conv-aa}
  For an admissible pair $\(q_a,q_b\)$, we consider a sequence of   graphs $\(x,f_k(x)\)$ with $x\in(-\ell,\ell)$ representing the   polygons $\ga_k$ obtained by the Birkhoff refinement. Then the   functions $f_k$ converge in $C^\be\((-\ell,\ell); \R^{n-1}\)$ to a   limit $f\in C^{0,1}\((-\ell,\ell); \R^{n-1}\)$.  We write   $\gamma=(x,f(x))$ for the limit graph.
\end{corollary}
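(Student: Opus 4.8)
The plan is to upgrade the uniform Lipschitz bound \eqref{eq:dq-est} and the uniform second-difference bound \eqref{eq:dq2-est} from Theorem~\ref{theo:dq-est} into an equicontinuity statement for the graph functions $f_k$, and then invoke Arzel\`a--Ascoli. First I would observe that \eqref{eq:dq-est} says precisely that each $f_k$, when extended affinely between the grid nodes $\{X_j = j\e_k\}$, is Lipschitz with constant $1$ uniformly in $k$; in particular the family $\{f_k\}$ is uniformly bounded (all $f_k$ take values in the fixed compact set determined by $\PN$, since $\ga_k\in\PP\subset\PPtwo\subset\PN$ by Lemma~\ref{lem:par-nbhd}) and uniformly equicontinuous with modulus $|x-x'|$. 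By Arzel\`a--Ascoli, a subsequence converges uniformly, i.e.\ in $C^0$, to some limit $f$; the uniform Lipschitz bound passes to the limit, so $f\in C^{0,1}((-\ell,\ell);\R^{n-1})$ with Lipschitz constant $\le 1$, and hence $\gamma=(x,f(x))$ is a well-defined Lipschitz graph.

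Next I would promote $C^0$-convergence of a subsequence to $C^\be$-convergence of the full sequence. For the H\"older gain: since each $f_k$ is $1$-Lipschitz one has the interpolation inequality $\norm{f_k - f_m}_{C^\be} \le C\norm{f_k-f_m}_{C^0}^{1-\be}\,(\norm{f_k}_{C^{0,1}}+\norm{f_m}_{C^{0,1}})^{\be} \le C'\norm{f_k-f_m}_{C^0}^{1-\be}$ for any $\be\in(0,1)$, so a $C^0$-Cauchy sequence of uniformly Lipschitz functions is automatically $C^\be$-Cauchy. Thus it suffices to show the \emph{whole} sequence $(f_k)$ is $C^0$-Cauchy, not merely that a subsequence converges. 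Here is where the second-difference bound \eqref{eq:dq2-est} enters: it should give that consecutive refinements $\bar\ga_k$ and $\ga_k$ (and hence $\ga_{k-1}$ and $\ga_k$) are close in sup-norm, with the error controlled by $\e_k^2$ times the curvature bound $N$. Concretely, when we embed $\ga_{k-1}$ into the finer grid by inserting midpoints to form $\bar\ga_k$, the graph does not move at all; then the Birkhoff map moves each interior node only by multiples of $\ze_k\le\e_k^{2+\al}$, and the number of moves is controlled because $\PPtwo$ is finite at scale $\e_k$ and each move strictly decreases discrete length. The cleaner route is: by \eqref{eq:dq2-est} the polygon $\ga_k$ differs from its own chord (the straight segment $q_a q_b$) by at most $O(N\ell^2)$ pointwise --- but that is not small in $k$. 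So instead I would estimate $\norm{f_k - f_{k-1}}_{C^0}$ directly: $\ga_{k-1}$ and $\ga_k$ agree at the endpoints and at the level-$(k-1)$ nodes up to the cumulative Birkhoff displacement, and the second-difference bound $|\De_j^k - \De_{j-1}^k|\le N\e_k^2$ forces the piecewise-linear $f_k$ to stay within $O(N\e_k^2 M_k\cdot\e_k)=O(N\ell\,\e_k)$ (sum of a discrete ``second derivative'' over $\le 2M_k$ nodes) of the piecewise-linear interpolation of $f_{k-1}$'s nodes, which is itself $O(N\e_{k-1}^2)=O(N\e_k^2)$ away from $f_{k-1}$. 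Either way one gets $\norm{f_k-f_{k-1}}_{C^0}\le C\,\e_k = C\ell 2^{-k}$, a summable bound, so $(f_k)$ is $C^0$-Cauchy, hence $C^\be$-Cauchy, hence convergent in $C^\be$ to the limit $f$ identified above.

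The main obstacle I anticipate is making the estimate $\norm{f_k - f_{k-1}}_{C^0}\le C\,\e_k$ genuinely rigorous, i.e.\ controlling how far the Birkhoff map displaces the embedded polygon $\bar\ga_k$ from $\ga_{k-1}$. Theorem~\ref{theo:dq-est} gives the \emph{uniform curvature} bound but not obviously a quantitative bound on the total displacement during a single Birkhoff sweep; one needs that the final polygon $\ga_k$ still interpolates between its endpoints with uniformly bounded discrete curvature $N$ \emph{and} that its nodes have not wandered far from the nodes of $\bar\ga_k$. The ``wandering'' control should follow from: (i) both $\bar\ga_k$ and $\ga_k$ are graphs over $[-\ell,\ell]$ with $|f'|\le1$ and $|f''_{\mathrm{disc}}|\le N$; (ii) two such discrete graphs agreeing at the two endpoints $x=\pm\ell$ differ in $C^0$ by at most $\sim N\ell^2$ --- but since that bound is $k$-independent it is not yet good enough, so one must also use that at \emph{even} nodes $\bar\ga_k$ inherits the actual level-$(k-1)$ curve and that the curvature bound propagates the agreement. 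I expect the honest argument couples the geometric ``two graphs with controlled curvature and common endpoints'' comparison with the summability $\sum_k\e_k<\infty$; the interpolation inequality then does the rest to yield $C^\be$-convergence of the whole sequence to $f\in C^{0,1}$.
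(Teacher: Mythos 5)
Your first paragraph is exactly the paper's proof: the uniform bound \eqref{eq:dq-est} makes the $f_k$ uniformly $1$-Lipschitz, Lemma~\ref{lem:par-nbhd} confines all polygons to the fixed compact set $\PN$, and Arzel\`a--Ascoli together with the compactness of the embedding $C^{0,1}\hookrightarrow C^\be$ gives convergence in $C^\be$ to a limit $f\in C^{0,1}$ with the Lipschitz bound passing to the limit. That is all the paper does (its proof is one line), so for the purposes of matching the paper you are done after your first paragraph; note, though, that this argument strictly yields convergence of a subsequence, and the paper is silent on upgrading this to the full sequence (the limit is only characterised later, in Theorem~\ref{theo:char-lim}, via the weak geodesic equation and the $C^{1,1}$ bound on the quadratic interpolants).

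The part of your proposal that goes beyond this --- proving the whole sequence is $C^0$-Cauchy via $\norm{f_k-f_{k-1}}_{C^0}\le C\e_k$ --- contains a genuine gap, which you yourself flag. Nothing in Theorem~\ref{theo:dq-est} or Proposition~\ref{prop:par0inv} controls the total displacement of a single Birkhoff sweep by $O(\e_k)$: each affirmative step moves one node by $\ze_k\le\e_k^{2+\al}$, but the number of affirmative steps is only bounded by the (enormous) number of grid polygons in $\PPtwo$, and the numerics in Section~\ref{sec:Numer-investigations} indicate of order $\e_k^{-2}$ steps per level, so the cumulative motion is not small in $\e_k$. The purely geometric comparison you fall back on (two $1$-Lipschitz graphs with discrete curvature $\le N$ and common endpoints) only gives the $k$-independent bound $O(N\ell^2)$, as you note; and indeed at level $k=1$ the embedded straight segment can be driven an $O(1)$ distance by the Birkhoff map, so a bound of the form $C\e_k$ cannot follow from curvature and endpoint data alone. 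So either state the corollary (as the paper implicitly does) at the level of subsequential Arzel\`a--Ascoli convergence, or accept that full-sequence convergence needs a separate uniqueness-of-limit argument rather than a summable-increment estimate.
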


\begin{proof}
  As $\ga_k\in C^{0,1}([0,1];S)$, we deduce the claimed convergence in   $C^\be([0,1];S)$ for any $\be\in(0,1)$ by Arzel\`a-Ascoli.
\end{proof}

Let us remark that the graphs of all polygons $\gamma_k$ and hence the limit $\gamma$ belong to the subset $\PN$ of $Q$ defined in Lemma~\ref{lem:par-nbhd}.

\subsection{Variational formulation for a geodesic}
\label{sec:Vari-form-geod}

Let us consider a geodesic which can be represented as a graph. In this subsection, we derive a weak formulation for its governing equation. Thus, we consider a curve represented as a graph $\ga(x)=(x,f(x))$, $x\in(-\ell,\ell)$. The length of this curve in the Jacobi metric $g^\Jac_{ij}(q)=e^{2h}(q)\delta_{ij}$ is given by
\begin{equation*}
  L[\ga]=\int_{-\ell}^{        
    \ell}\sqrt{g^\Jac_{ij}(\ga(x))\dot\ga^i\dot\ga^j}\d x
  =\int_{-\ell}^{\ell}e^{h(\ga(x))}\sqrt{1+\abs{\dedx f}^2}\d x.
\end{equation*}
For a \emph{orthogonal} perturbation $\ga_\e=(x,f+\e y)$, where $y$ has compact support in $\(-\ell,\ell\)$, we deduce via integration by parts
\begin{align*}
  \left.\frac d{d\e}\right|_{\e=0}L[{\ga_\e}]
  % \\
  &=\int_{-\ell}^{\ell}  
  \left[y(x)\cdot\nabla_Ne^{h(\ga(x))}\sqrt{1+\abs{\dedx        
        f(x)}^2}+e^{h(\ga(x))}\frac{\dedx f(x)\cdot \dedx
      y(x)}{\sqrt{1+\abs{\dedx f(x)}^2}}\right] \d x
  .
\end{align*}
Thus, a graph $\ga(x)=(x,f(x))$ with $f\in H^1\(-\ell,\ell\)$ is stationary for the length functional $L_\ga$ if the variational derivative $\left.\frac d{d\e}\right|_{\e=0}L_{\ga_\e}$ vanishes for all functions $y\in H_0^1\(-\ell,\ell\)$.  We will use this weak formulation, rather than the more common one obtained by a further integration by parts.

\subsection{Characterisation of the limit of the Birkhoff refinement}
\label{sec:char-lim}

Using a smoother interpolation of the grid points representing the polygons of the Birkhoff refinement we obtain a better convergence and a smoother characterisation of its limit than the previous result of Corollary~\ref{cor:conv-aa}.

\begin{theorem}
  \label{theo:char-lim}
  For an admissible pair $\(q_a,q_b\)$, consider the sequence of   polygons $\ga_k=\(x,f_k(x)\)$ obtained by the Birkhoff refinement.   The functions $f_k$ converge in $H^1\(-\ell,\ell\)$ to a limit $f$.   Furthermore, $f\in C^{1,1} \(-\ell,\ell\)$ and the limit graph   $\ga(x)=(x,f(x))$ satisfies
  \begin{align}
    0
    &=\int_{-\ell}^{\ell}    
    \left[y(x)\cdot\nabla_Ne^{h(\ga(x))}\sqrt{1+\abs{\dedx      
          f(x)}^2}+e^{h(\ga(x))}\frac{\dedx f(x)\cdot \dedx
        y(x)}{\sqrt{1+\abs{\dedx f(x)}^2}}\right]\d x 
    . \label{eq:weak-geo}
  \end{align}
  for every function $y\in H_0^1\(-\ell,\ell\)$.
\end{theorem}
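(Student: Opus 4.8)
The goal is to pass to the limit in the discrete Birkhoff relations and recover the weak geodesic equation~\eqref{eq:weak-geo}. The strategy has two halves: first upgrade the $C^\be$-convergence of Corollary~\ref{cor:conv-aa} to $H^1$-convergence (equivalently, show the limit graph has the same discrete length, in the limit, as the approximating polygons, with no loss of length in the limit), and then derive the Euler--Lagrange equation by testing the discrete stationarity. For the first step I would exploit that the $f_k$ are uniformly Lipschitz (bound~\eqref{eq:dq-est}) together with the uniform second-difference bound~\eqref{eq:dq2-est}: this latter bound says precisely that the discrete curvature is controlled, so the piecewise-linear derivatives $\dedx f_k$ (which are step functions taking the values $\De_j^k/\e_k$) have uniformly bounded variation. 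By Helly's selection theorem (or compactness of $BV$ in $L^1$) a subsequence of $\dedx f_k$ converges in $L^1$, hence in $L^p$ for every $p<\infty$ by the uniform $L^\infty$ bound, and the limit must be $\dedx f$. This gives $f_k\to f$ in $H^1(-\ell,\ell)$ and, moreover, shows $f'\in BV$, which will feed into the $C^{1,1}$ regularity at the end.

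\textbf{Deriving the weak equation.} For the second step, fix a test function $y\in H_0^1(-\ell,\ell)$; approximate it by grid functions $y^k$ supported in $\JJ_k$ with $y^k\to y$ and $\dedx y^k\to\dedx y$ appropriately. The key point is that when $\ga_k$ is the \emph{final} polygon of the Birkhoff map at level $k$, moving an interior vertex $q_j^k$ by $\pm\ze_k e_i$ does not decrease the discrete length; this is the content of the Birkhoff step being void at termination (Proposition~\ref{prop:par0inv}). Dividing the corresponding non-strict inequalities $\Lbar_{\seg{q_{j-1}^k}{q_j^k\pm\ze_k e_i}}+\Lbar_{\seg{q_j^k\pm\ze_k e_i}{q_{j+1}^k}}\ge \Lbar_{\seg{q_{j-1}^k}{q_j^k}}+\Lbar_{\seg{q_j^k}{q_{j+1}^k}}$ by $\ze_k$, recognising the left-hand side minus right-hand side as essentially $\Cbar(\e_k,\de)$ with $\de=\pm\ze_k e_i$, and invoking the expansion~\eqref{eq:Ceed}--\eqref{eq:ceed-est-a} from Lemma~\ref{lem:diff-quot-disc}, we obtain for each interior node the two-sided inequality
\begin{equation*}
  \abs{-\frac{\E{\tfrac{\e_k}2}\G_+-\E{\tfrac{-\e_k}2}\G_-}{\e_k} \cdot e_i
  + \nabla_N e^h(q_j^k)\cdot e_i\,\sqrt{1+\abs{\tfrac{\De_j^k}{\e_k}}^2}}
  = O(\e_k^\al) + O(\abs{\ze_k/\e_k^2}),
\end{equation*}
uniformly in $j$ and $i$. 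This is a discrete, pointwise form of the Euler--Lagrange equation. Multiplying by $e_i\cdot y^k_j\,\e_k$, summing over $j\in\JJ_k$ and $i\in\{2,\dots,n\}$, and performing a discrete summation by parts to move the difference quotient of $e^h\G$ onto $y^k$, yields a discrete version of the right-hand side of~\eqref{eq:weak-geo}, up to error terms that vanish as $\e_k\to0$ (using $\ze_k=\e_k^{2+\al}$ so $\ze_k/\e_k^2=\e_k^\al\to0$, and $\sum_j\e_k=O(\ell)$).

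\textbf{Passing to the limit and regularity.} It remains to show each term converges to the corresponding integral in~\eqref{eq:weak-geo}. Using $f_k\to f$ uniformly, $\dedx f_k\to\dedx f$ in $L^2$, continuity of $e^h$ and $\nabla_N e^h$ on the compact set $\PN$, and the explicit forms $\PG(\eta)=\sqrt{1+\abs\eta^2}$, $\G(\eta)=\eta/\PG(\eta)$, the zeroth-order term $\int y\cdot\nabla_N e^{h(\ga)}\sqrt{1+\abs{\dedx f}^2}$ passes to the limit by dominated convergence. For the first-order term $\int e^{h(\ga)}\,\dedx f\cdot\dedx y/\sqrt{1+\abs{\dedx f}^2}$, the convergence $\dedx f_k\to\dedx f$ in $L^2$ together with the uniform bound $\abs{\dedx f_k}\le1$ (so the integrand is bounded and the nonlinearity $\eta\mapsto\eta/\sqrt{1+\abs\eta^2}$ is Lipschitz on the unit ball) gives convergence. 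This establishes~\eqref{eq:weak-geo} for all $y\in H_0^1$. Finally, $C^{1,1}$ regularity: rewrite~\eqref{eq:weak-geo} as $\ddx\!\bigl(e^{h(\ga)}\,\G(\dedx f)\bigr) = \nabla_N e^{h(\ga)}\,\PG(\dedx f)$ in the distributional sense; the right-hand side is bounded (indeed continuous, since $f\in C^{0,1}$ and $\nabla_N e^h$ is continuous), so $e^{h(\ga)}\G(\dedx f)\in C^{0,1}$, and since $\eta\mapsto\eta/\sqrt{1+\abs\eta^2}$ is a bi-Lipschitz diffeomorphism of the unit ball and $e^{h(\ga)}$ is bounded away from $0$ and Lipschitz, one solves for $\dedx f$ to conclude $\dedx f\in C^{0,1}$, i.e.\ $f\in C^{1,1}(-\ell,\ell)$.

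\textbf{Main obstacle.} The delicate point is the $H^1$-convergence, i.e.\ ensuring \emph{no length is lost in the limit}: the uniform Lipschitz bound alone gives only weak-$*$ convergence of derivatives, which is insufficient to pass to the limit in the nonlinear first-order term. The uniform second-difference bound~\eqref{eq:dq2-est} is exactly what rescues this, by providing $BV$-compactness of the derivatives and hence strong $L^2$ convergence; making this rigorous---carefully relating the step-function derivatives of the polygons to $BV$ functions and controlling the interpolation errors between the piecewise-linear and the smoother ($C^{1,1}$) representations mentioned before Theorem~\ref{theo:char-lim}---is where the real work lies. A secondary technical nuisance is bookkeeping the error terms from Lemma~\ref{lem:diff-quot-disc} uniformly over all $O(M_k)$ interior nodes while summing against the test function, but the choice $\ze_k=\e_k^{2+\al}$ was engineered precisely so that $\ze_k/\e_k^2=\e_k^\al$ beats the $O(1)$ count $\sum_j\e_k$.
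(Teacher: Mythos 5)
Your proposal is correct, and the core derivation of the weak equation coincides with the paper's: both arguments rest on the fact that at termination of the Birkhoff map on level $k$ one has $\Cbar_j(\e_k,\ze_k\si e_i)\ge0$ for \emph{both} signs $\si=\pm1$, on the expansion of Lemma~\ref{lem:diff-quot-disc}/\eqref{eq:Ceed}, and on the scaling $\ze_k\le\e_k^{2+\al}$ so that the $O(\abs{\ze_k/\e_k^2})$ error is $O(\e_k^\al)$. You organise this as a node-wise two-sided bound $\abs{e_i\cdot[\text{discrete Euler--Lagrange expression}]}\le C\e_k^\al$ (which is legitimate, provided you also note, as the paper does, that the cross term in \eqref{eq:Ceed} is $O(\e_k)$ by \eqref{eq:dq-est}--\eqref{eq:dq2-est}), and then sum against a discretised test function with discrete summation by parts; the paper instead keeps the one-sided information, integrates the piecewise-constant expression against $y$, and recovers the equality only in the limit by choosing $\si$ interval-by-interval according to the sign of $\int y$ and then reversing it. These are equivalent manoeuvres. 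Where you genuinely diverge is in the two supporting steps. First, for the strong convergence of the derivatives you bound the total variation of the step functions $\dedx f_k$ by the uniform second-difference bound \eqref{eq:dq2-est} (roughly $2\ell N\sqrt{n-1}$, independent of $k$) and invoke Helly/BV-compactness plus the $L^\infty$ bound $\abs{\dedx f_k}\le1$; since any $L^1$-limit must be $\dedx f$, the whole sequence converges in $L^2$ and $f_k\to f$ in $H^1$. The paper instead constructs in Section~\ref{sec:Proof-prot-int} a $C^1$ piecewise-quadratic interpolant $\gapq$ with $\norm{\dezdx\gapq}_{L^\infty}$ uniformly bounded, obtains $C^{1,\be}$ convergence by Arzel\`a--Ascoli, and transfers this to the piecewise-constant and piecewise-linear interpolants via the finite-element estimates of Lemma~\ref{lem:fin-el}. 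Second, for regularity you bootstrap $f\in C^{1,1}$ from the weak ODE itself (right-hand side bounded, $e^{h}$ Lipschitz and bounded below on the compact set, $\G$ bi-Lipschitz on the range forced by $\abs{f'}\le1$), whereas the paper reads $C^{1,1}$ off the uniform bound on $\dezdx\gapq$. Your route is more elementary and self-contained (no spline construction, no interpolation-error lemma), at the price of a subsequence/uniqueness argument and of obtaining the $C^{1,1}$ bound only a posteriori from the equation; the paper's interpolant gives the quantitative curvature bound on the limit directly inherited from the discrete bound $N$ and furnishes the convergences \eqref{eq:con-con}--\eqref{eq:con-lin} in exactly the form needed for its integral limit passage. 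Both are complete proofs of the statement.
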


\begin{proof}
Analogously to the definition of $\Cbar(\e,\de)$ in~\eqref{eq:Ceed}, we define
\begin{equation*}
  \Cbar_j(\e_k,\ze_k \si e_i)\defeq
  \(\Lbar{\seg{q_{j-1}}{q_j+\ze_k \si e_i}}
  +\Lbar{\seg{q_j+\ze_k \si e_i}{q_{j+1}}}   \)
  -\(\Lbar{\seg{q_{j-1}}{q_j}} +
  \Lbar{\seg{q_j}{q_{j+1}}}\) . 
\end{equation*}
It is convenient to write for $j\in\JJ_k$ in analogy to~\eqref{eq:E-pm}
\begin{align*}
  q_{j+\frac12}^k&\defeq \frac12\(q_j^k+q_{j+1}^k\), \\
  X_{j+\frac12}^k&\defeq \frac12\(X_j^k+X_{j+1}^k\). 
\end{align*}
In further analogy to~\eqref{eq:G-pm}, let
\begin{align}
\G_j^k&\defeq 
  \frac{\frac{\De_j^k}{\e_k}}{\sqrt{1+\abs{\frac{\De_j^k}{\e_k}}^2}}
. \label{eq:Gj-pm}
\end{align}
We want to examine $\Cbar(\e,\de)$ of~\eqref{eq:Ceed} evaluated at a node $q_j$ for $\e=\e_k$ and $\de=\ze_k \si e_i$, where $\si\in\{\pm1\}$ and $i\in\{2,\ldots,n\}$.  Theorem~\ref{theo:dq-est} provides the necessary estimates for the difference quotients, so that
\begin{align*}
   \nabla_Ne^h(q_j^k)
    \frac{\({\frac{\De_j^k}{{\e_k}}}+{\frac{-\De_{j-1}^k}{-{\e_k}}}\)
          \({\frac{\De_j^k}{{\e_k}}}-{\frac{-\De_{j-1}^k}{-{\e_k}}}\)}
      {2\(\sqrt{1+\abs{\frac{\De_j^k}{{\e_k}}}^2}
        +\sqrt{1+\abs{\frac{-\De_{j-1}^k}{-{\e_k}}}^2}\)} = O({\e_k}),
\end{align*}
since $\frac{{\frac{\De_j^k}{{\e_k}}}+{\frac{\De_{j-1}^k}{{\e_k}}}} {\sqrt{1+\abs{\frac{\De_j^k}{{\e_k}}}^2}+   \sqrt{1+\abs{\frac{\De_{j-1}^k}{{\e_k}}}^2}} \le 1$ and $\frac{\De_j^k-\De_{j-1}^k}{\e_k} =O({\e_k})$ by~\eqref{eq:dq-est}--\eqref{eq:dq2-est}. Further, by Definition~\ref{def:refine},
\begin{align*}
  O\(\abs{\frac{\ze_k}{\e_k^2}}\) = O\(\abs{\e_k}^\al\). 
\end{align*}
Thus, we can write
\begin{equation*}
  \frac{\Cbar_j(\e_k,\ze_k \si    
    e_i)}{\si\e_k\abs{\ze_k}}+O(\abs{\e_k}^\al) 
  =e_i
  \cdot\left(
  -\frac{\Ek{j+\frac12} \G_j^k-\Ek{j-\frac12}\G_{j-1}^k}{\e_k}
  +
  \nabla_Ne^h\(q_j\)\sqrt{1+\abs{\frac{\De_j^k}{\e_k}}^2}
  \right)
  .
\end{equation*}
We identify the right hand side as a product of a function $w$ evaluated on the piecewise constant interpolation $\gapc$ times a function $z$ evaluated on the piecewise linear interpolation $\gapl$, that is, for all $x\in\(X_{j-\frac12}^k , X_{j+\frac12}^k\)$ there holds
\begin{multline}
  \label{eq:cj-est}
  \frac{\Cbar_j(\e_k,\ze_k \si
    e_i)}{\si\e_k\abs{\ze_k}}+O(\abs{\e_k}^\al) \\
  =e_i
  \cdot\left[
    D_{\e_k}\(e^h\(x,\gapc(x)\) 
    \G\(\dedx\gapl(x)\)\)+\DE\(x,\gapc(x)\)  \PG\(\dedx\gapl(x)\)
  \right]
  ,
\end{multline}
where $D_{\e_k}$ is the centred difference quotient
\begin{equation*}
  D_{\e}y(x)\defeq\frac{y(x+\frac\e2)-y(x-\frac\e2)}\e
\end{equation*}
and
\begin{equation*}
  \G(\xi) \defeq   \frac{\xi}{\sqrt{1+\abs{\xi}^2}}, \quad
  \PG(\xi) \defeq   \sqrt{1+\abs{\xi}^2}. 
\end{equation*}
We remark that the right-hand of~\eqref{eq:cj-est} side is globally defined for $x\in(-\ell,\ell)$, and piecewise constant. Given an arbitrary fixed function $y\in H_0^1\(-\ell,\ell\)$, we can multiply~\eqref{eq:cj-est} with $y$ and integrate by parts to obtain
\begin{align}
  \begin{split}
    & \sum_{j=-M+1}^{M-1}\int_{X_{j-\frac12}}^{X_{j+\frac12}}
    y(x)\left[\frac{\Cbar_j(\e_k,\ze_k \si        
        e_i)}{\si\e_k\abs{\ze_k}}+O(\abs{\e_k}^\al)\right] \d x  \\
    &=e_i
    \cdot\int_{-\ell+\frac {\e_k}2}^{\ell-\frac {\e_k}2}y(x)\left[
      D_{\e_k}\(e^h\(x,\gapc(x)\)       
      \G\(\dedx\gapl(x)\)\)+\DE\(x,\gapc(x)\)  \PG\(\dedx\gapl(x)\)
    \right] \d x  \\
  &=e_i
  \cdot\int_{-\ell+\frac {\e_k}2}^{\ell-\frac {\e_k}2}\left[
    - \(D_{\e_k}y(x)\) e^h\(x,\gapc(x)\)\G\(\dedx\gapl(x)\)
    +y(x)\DE\(x,\gapc(x)\)  \PG\(\dedx\gapl(x)\)
  \right] \d x
  . 
  \end{split}
  \label{eq:int-est}
\end{align}

We observe that the right-hand side does not depend on $\si$.  Now recall that stoppage of the algorithm on level $k$ implies that $\Cbar_j(\e_k,\ze_k \si e_i)\ge0$ for both choices of the sign of $\si=\pm1$.

Let us assume for the moment that piecewise constant interpolation $\gapc$ and the piecewise linear interpolation $\gapl$ converge strongly to the same limit $\ga$ as $k\to\infty$ in the sense that
\begin{align}
  \norm{\gapc-\ga}_{L^2}&\to0,  \label{eq:con-con}
  \intertext{and}
\norm{\dedx\gapl-\dedx \ga}_{L^2}&\to0.  \label{eq:con-lin}
\end{align}
Then the argument can be finished as follows.  Observe that $\Cbar_j(\e_k,\ze_k \si e_i)$ is non-negative for both choices of $\si=\pm1$, whereas the last line of~\eqref{eq:int-est} does not depend on the chosen $\si$ anymore. On each interval $\(X_{j-\frac12} , X_{j+\frac12}\)$, we first choose $\sigma$ to have the same sign as $\int_{X_{j-\frac12}}^{X_{j+\frac12}}y(x)\d x$. Then the sum on the left-hand side is non-negative. We pass to the limit $k\to\infty$ in~\eqref{eq:int-est} and find
\begin{multline}
  0 \le  
  \lim_{k\to\infty}\sum_{j=-M+1}^{M-1}\int_{X_{j-\frac12}}^{X_{j+\frac12}} 
  y(x) \frac{\Cbar_j(\e_k,\ze_k \si e_i)}{\si\e_k\abs{\ze_k}} \d x \\
  =e_i
  \cdot\int_{-\ell}^{\ell}\left[
    - \(\dedx y(x)\) e^h\(x,\ga(x)\)\G\(\dedx\ga(x)\)
    +y(x)\DE\(x,\ga(x)\)  \PG\(\dedx\ga(x)\)
  \right] \d x
  ; 
\end{multline} 
here the convergence of the difference quotient $D_{\e_k}y(x)$ follows from~\cite[Lemma 7.24]{Gilbarg:01a}.

Similarly, we then choose $\sigma$ to have always the opposite sign and obtain the reversed inequality. Together this yields
\begin{equation*}
  0 = e_i
  \cdot\int_{-\ell}^{\ell}\left[
    - \(\dedx y(x)\) e^h\(x,\ga(x)\)\G\(\dedx\ga(x)\)
    +y(x)\DE\(x,\ga(x)\)  \PG\(\dedx\ga(x)\)
  \right] \d x
  . 
\end{equation*} 
As we can test all normal directions $e_i$, with $i=2,\ldots,n$, in this fashion, we obtain the vectorial identity
\begin{equation}
  \label{eq:weak-geodes}
  0 = \int_{-\ell}^{\ell}\left[
   - \(\dedx y(x)\) e^h\(x,\ga(x)\)\G\(\dedx\ga(x)\)
   +y(x)\DE\(x,\ga(x)\)  \PG\(\dedx\ga(x)\)
  \right] \d x
\end{equation} 
in $\R^{n-1}$. By substituting the definitions of $\G$ and $\PG$, we recover the claim.
\end{proof}

\subsubsection{Proof of~\protect{\eqref{eq:con-con}} and~\protect{\eqref{eq:con-lin}}}
\label{sec:Proof-prot-int}
The previously assumed convergence of the interpolants is established in the following arguments.

\begin{lemma}[Estimates for interpolants]
  \label{lem:fin-el}
  Given a function $\gapqo\in H^2$, let $\gapco$ be the piecewise   constant interpolation on an equidistant grid of size $\e$, and   similarly $\gaplo$ the piecewise linear interpolation. Then the   following error estimates hold
  \begin{align*}
    \norm{\gapco-\gapqo}_{L^2}&\le C\e^2\norm{\dezdx\gapqo}_{L^2}
    ,  %\label{eq:pc-int}
    \intertext{and}
    \norm{\dedx\gaplo-\dedx\gapqo}_{L^2}&\le  
    C\e\norm{\dezdx\gapqo}_{L^2}  
    % \label{eq:pl-int}
    .
  \end{align*}
\end{lemma}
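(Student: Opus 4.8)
The claim is a pair of standard finite-element-type interpolation error estimates, so the plan is to reduce both to a single local (element-by-element) estimate and then sum. First I would set up notation: on the grid of size $\e$, write the nodes as $x_j = j\e$ and the elements as $I_j = [x_j, x_{j+1}]$; on each $I_j$ the piecewise constant interpolant $\gapco$ equals the nodal value $\gapqo(x_j)$ (or a suitable midpoint value, whichever convention the paper intends), and $\gaplo$ is the affine function agreeing with $\gapqo$ at the two endpoints $x_j, x_{j+1}$.

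\textbf{Local estimates.} The core is: for $v \in H^2(I_j)$, show $\norm{v - v(x_j)}_{L^2(I_j)} \le C\e^2 \norm{v''}_{L^2(I_j)}$ and $\norm{v' - (\Pi_1 v)'}_{L^2(I_j)} \le C\e \norm{v''}_{L^2(I_j)}$, where $\Pi_1 v$ is the affine interpolant. The cleanest route is a scaling / Bramble--Hilbert argument: rescale $I_j$ to the reference interval $[0,1]$, observe that both $v \mapsto v - v(0)$ and $v \mapsto v' - (\Pi_1 v)'$ are bounded linear maps $H^2 \to L^2$ that annihilate affine functions (for the first one, linear functions; note $v - v(0)$ kills constants, which after one Poincaré step gives the extra power of $\e$, and a second application to $v'$ is what yields $\e^2$ — alternatively just use the integral remainder $v(x) - v(x_j) = \int_{x_j}^x v'(t)\,dt$ together with $v'(x) - \text{(mean of }v') = \int \cdots v''$), and hence each is controlled by the $H^2$-seminorm $\norm{v''}$; then scale back, tracking that $\norm{v''}_{L^2}$ picks up the factors $\e^2$ and $\e$ respectively under the change of variables. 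For the derivative estimate one uses additionally that $(\Pi_1 v)'$ is the average of $v'$ over $I_j$ (a consequence of the fundamental theorem of calculus applied to the affine interpolant), so $v' - (\Pi_1 v)'$ has zero mean on $I_j$ and Poincaré--Wirtinger applies directly.

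\textbf{Global assembly.} Having the local bounds, square and sum over all elements $I_j$: $\norm{\gapco - \gapqo}_{L^2(-\ell,\ell)}^2 = \sum_j \norm{\gapqo - \gapqo(x_j)}_{L^2(I_j)}^2 \le C^2\e^4 \sum_j \norm{\gapqo''}_{L^2(I_j)}^2 = C^2\e^4 \norm{\dezdx\gapqo}_{L^2(-\ell,\ell)}^2$, and likewise $\norm{\dedx\gaplo - \dedx\gapqo}_{L^2}^2 \le C^2\e^2 \norm{\dezdx\gapqo}_{L^2}^2$; take square roots. Since $\gapqo$ is vector-valued in $\R^{n-1}$ one applies the scalar estimate componentwise and sums, absorbing the dimensional constant into $C$.

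\textbf{Main obstacle.} There is no deep obstacle here — the result is textbook finite-element theory (it is essentially the Bramble--Hilbert lemma specialised to one dimension). The only points needing care are: (i) pinning down exactly which nodal value defines $\gapco$ on each element (left endpoint, right endpoint, or midpoint), since the constant $C$ and possibly the clean statement depend on it, and matching this to how $\gapc$ was used in the proof of Theorem~\ref{theo:char-lim}; and (ii) making sure the quadratic function $\gapqo \in H^2$ to which the lemma will be applied (the smooth interpolant of the Birkhoff polygon vertices alluded to at the start of Section~\ref{sec:char-lim}) genuinely lies in $H^2$ with a uniformly (in $k$) bounded second derivative — but that is the business of Lemma~\ref{lem:fin-el}'s application, not of Lemma~\ref{lem:fin-el} itself, which takes $\gapqo \in H^2$ as a hypothesis.
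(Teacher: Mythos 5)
The paper's own ``proof'' of this lemma is a one-line citation to Brenner--Scott, so your job was essentially to reproduce the textbook interpolation argument. For the second estimate you do this correctly and in the standard way: $(\Pi_1 v)'$ is the element mean of $v'$ (fundamental theorem of calculus on each element $I_j$ of length $\e$), so $v'-(\Pi_1 v)'$ has zero mean on $I_j$, Poincar\'e--Wirtinger gives $\norm{v'-(\Pi_1 v)'}_{L^2(I_j)}\le C\e\norm{v''}_{L^2(I_j)}$, and summing over elements yields the global bound $C\e\norm{\dezdx\gapqo}_{L^2}$.

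For the first estimate, however, your argument has a genuine gap. The local functional $v\mapsto v-v(x_j)$ annihilates only constants, not affine functions, so the Bramble--Hilbert/scaling step you invoke can only give $\norm{v-v(x_j)}_{L^2(I_j)}\le C\e\norm{v'}_{L^2(I_j)}$; your parenthetical ``a second application to $v'$ is what yields $\e^2$'' does not apply, because there is no second vanishing moment to exploit for a piecewise \emph{constant} interpolant. Indeed, no argument can close this step for the inequality as printed: take $\gapqo$ a nonconstant affine function, then $\dezdx\gapqo\equiv0$ while $\gapco\neq\gapqo$ (whatever nodal convention defines $\gapco$), so $\norm{\gapco-\gapqo}_{L^2}\le C\e^2\norm{\dezdx\gapqo}_{L^2}$ fails. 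This is really a defect of the lemma as stated, which the citation glosses over; the correct textbook estimate is first order, $\norm{\gapco-\gapqo}_{L^2}\le C\e\norm{\dedx\gapqo}_{L^2}$ (the $\e^2$ rate belongs to the $L^2$ error of the piecewise \emph{linear} interpolant). Note that this weaker bound is all that is actually needed in Section~\ref{sec:Proof-prot-int}, since there one only requires $\norm{\gapc-\ga}_{L^2}\to0$ and $\norm{\dedx\gapq}_{L^\infty}$ is bounded uniformly in $k$; so the repair is to prove and use the first-order estimate (or to state the first inequality for the piecewise linear interpolant), rather than to seek the stated $\e^2$ bound.
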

\begin{proof}
This is a standard argument in Finite Elements, see for example~\cite[Theorem 0.8.7 and Section 4]{Brenner:08a}. 
\end{proof}

For a fixed refinement level $k$, let $q_j$ be the point set associated to $\ga_k$, the output of the Birkhoff map at level $k$. Now, we first construct a quadratic interpolation $\gapq$ of $\left\{q_j^k\right\}$ with the special property that its piecewise constant and piecewise linear interpolation coincide with the interpolations $\gapc$ and $\gapl$ introduced before. Specifically, in our situation the quadratic interpolation can be chosen in such a way that $\norm{\dezdx\gapq}_{L^2}$ is bounded independently of $k$.

Explicitly, to construct $\gapq$, any two neighbouring nodes $q_j^k=(X_j^k,Y_j^k)$ and $q_{j+1}^k=(X_{j+1}^k,Y_{j+1}^k)=(X_j^k+\e_k,Y_j^k+\De^k_j)$ are connected via two quadratic splines $s_\pm$ on $\(X_j^k,X_{j+\frac12}^k\)$ and $\(X_{j+\frac12}^k,X_{j+1}^k\)$, with matching conditions
\begin{alignat*}2
  s_-(X_j^k)&=Y_j^k, \quad &
  s'_-(X_j^k)&=b_-\defeq\frac{\De_j^k+\De_{j-1}^k}{2\e_k}, \\
  s_+(X_{j+1}^k)&=Y_{j+1}^k, \quad &
  s'_+(X_{j+1}^k)&=b_+\defeq\frac{\De_{j+1}^k+\De_{j}^k}{2\e_k}, \\
  s_-(X_{j+\frac12}^k)&=s_+(X_{j+\frac12}^k), \quad &
  s'_-(X_{j+\frac12}^k)&=s'_+(X_{j+\frac12}^k).
  \intertext{On the boundary, we vary the definition slightly for    
    $j=-M_k$ and $j=M_k-1$ respectively,}
  s_-(X_{-M_k}^k)&=Y_{-M_k}^k, \quad &
  s'_-(X_{-M_k}^k)&=b_-\defeq\frac{\De_j^k}{\e_k}
  \intertext{and}
  s_+(X_{M_k}^k)&=Y_{M_k}^k, \quad &
  s'_+(X_{M_k}^k)&=b_+\defeq\frac{\De_{M_k-1}^k}{\e_k}
  .
\end{alignat*}

We recall that the polygons $\ga_k=\(x,f_k(x)\)$ obtained by the Birkhoff refinement satisfy the finite difference estimates~\eqref{eq:dq-est}--\eqref{eq:dq2-est}.  For the chosen assignments $b_\pm$ of the first derivatives at the nodes, the piecewise quadratic interpolation $\gapq$ is continuously differentiable throughout.  Further, it can be computed that the bound on the second difference quotients implies a bound
\begin{align}
  \label{eq:d2-bound}
  \norm{\dezdx\gapq}_{L^\infty}\le C,
\end{align}
which holds uniformly in $k$. Hence the sequence $\left\{\gapq\right\}$ is uniformly bounded in $C^{1,1}$, and we infer convergence in $C^{1,\be}$ to $\ga\in C^{1,1}$ by Arzel\`a-Ascoli. This implies that
\begin{align}
  \norm{\gapq-\ga}_{L^2}&\to0,  \label{eq:q-con}
  \intertext{and}
\norm{\dedx\gapq-\dedx \ga}_{L^2}&\to0  \label{eq:q-one-con}
\end{align}
as $k\to\infty$. Choosing $\gapq$ as the function $\gapqo$ in Lemma~\ref{lem:fin-el}, we infer for all $k$ that
\begin{align*}
  \norm{\gapc-\ga}_{L^2}&\le \norm{\gapc-\gapq}_{L^2} + \norm{\gapq-\ga}_{L^2} \\ &\le C\e_k^2\norm{\dezdx\gapq}_{L^2} + \norm{\gapq-\ga}_{L^2}
  \intertext{and}
\norm{\dedx\gapl-\dedx \ga}_{L^2}& \le \norm{\dedx\gapc-\dedx\gapq}_{L^2} + \norm{\dedx\gapq-\dedx\ga}_{L^2} \\& \le C\e_k\norm{\dezdx\gapq}_{L^2} + \norm{\dedx\gapq-\dedx\ga}_{L^2} 
. 
\end{align*}
All expressions tend to $0$ by~\eqref{eq:d2-bound}--\eqref{eq:q-one-con}. This proves the previously assumed claims~\eqref{eq:con-con} and~\eqref{eq:con-lin}.

\section{Numerical investigations}
\label{sec:Numer-investigations}

Here we present experimental convergence rates and estimates for the computational effort for a simple benchmark problem.  First, we obtain an explicit solution for a special metric $g^\Jac$, which will serve as comparison for the numerical approximations on different discretisation levels $\e_k$.

\subsection{Special analytic solution}

From the weak formulation of the geodesic equation for graphs~\eqref{eq:weak-geo}, we obtain by integration by parts the following strong version
\begin{multline*}
\nonumber
%0=
  \nabla_Ne^h\sqrt{1+\a{f'}^2}
 -\(\frac d{dx}e^h+f'\cdot\nabla_Ne^h\sqrt{1+\a{f'}^2}\)\frac{f'}{\sqrt{1+\a{f'}^2}}
=% \\-
 e^h\frac{(1+\a{f'}^2)I-f'f'^T}{\sqrt{1+\a{f'}^2}^3}f''
 ,
\end{multline*} 
which has to satisfied by a geodesic connecting the two points $(\pm\tfrac\ell2,0)$ being a graph $\gamma(x)=(x,f(x))$, $f(\pm\frac\ell2)=0$.

If we assume further that $h$ does not depend on $x$, the last equation simplifies to
\begin{align*}
0 =&
  \frac{\Big((1+\a{f'}^2)I-f'f'^T\Big)}{\sqrt{1+\a{f'}^2}^3}
 \(\nabla_N{h}\(1+\a{f'}^2\)-f''\)
 .
\end{align*}
That is, we need to solve the simpler equation
$$f''=\nabla_N{h}\(1+\a{f'}^2\).$$
Let us restrict ourselves to linear $h$, not depending on the `horizontal' co-ordinate $x$, that is, $h=h(y)=-\al n^Ty$ with $\al>0$ and $n\in\R^{n-1}$ a unit vector, $\a n=1$.  Then $\nabla_N{h}=-\al n$, hence, $f''=-\al\(1+\a{f'}^2\)n$.  With the ansatz $f'=\phi'n$, the equation can be rewritten as
\begin{equation*}
  \phi''n=-\al n\(1+\a{\phi'}^2\),
\end{equation*}
hence we are left with the scalar equation
\begin{equation*}
  \phi''=-\al\(1+\a{\phi'}^2\)
\end{equation*}
for $\phi'$ alone. Its solution is $\phi'(x)=\tan(C- \al x)$.  Thus, integrating once again we deduce $\phi(x)=\frac1\al\ln\cos(-C+\al x)+D$.  Matching the boundary conditions, we find that the geodesic connecting the two points $(\pm\tfrac\ell2,0)$ is
\begin{equation*}
  \ga(x)=(x,f(x))
, \quad
x\in(-\tfrac \ell2,\tfrac \ell2),\
f(x)=n\phi(x)
\end{equation*}
with
\begin{equation*}
  \phi(x)=\frac1\al\ln\frac{\cos\al x}{\cos\al\tfrac \ell2}.
\end{equation*}
We remark that this representation requires $\al\tfrac \ell2<\frac\pi2$, or equivalently $\al<\frac\pi\ell$.  Note that for $\al\to0$, the metric approaches the constant Euclidean metric $h\equiv0$, and we obtain the straight line segment $(x,f\equiv0)$ in the limit.

For the other extreme, we remark that if $\al\to\frac\pi\ell$, the geodesic converges to the set of two parallel lines $\{(\pm\tfrac \ell2,y) \bigm| y\ge0\}$ connected at infinity.  In fact, this set of constant and finite length will be the minimising configuration for all values of $\al\ge\frac\pi\ell$.

\subsection{Computational effort} 
We use the analytic solution found above as a benchmark test for convergence of the method.  Let
\begin{equation*}
  g^\Jac_{ij}(x,y)\defeq e^{2h(y)}\de_{ij}
  \quad\text{with}\quad
  h(y)=-\al y
  .
\end{equation*}

We now present computations in $\R^2$ with $\ell=2$, so connecting $(\pm1,0)$.  Let us consider three different metrics by choosing three values $\al_1=0.65$, $\al_2=0.9$, and $\al_3=1.1$, all less than $\frac\pi2$.  Larger values of $\alpha$ emphasise the difference of $g$ compared to the flat Euclidean metric, hence one expects a larger curvature in the geodesic, which bends further away from the horizontal straight connection between the boundary points $(\pm1,0)$. We also explore how the performance of the algorithm depends on this geometrical feature.

\begin{figure}
  \includegraphics[width=0.99\textwidth,viewport=55 200 545 600,clip=true]{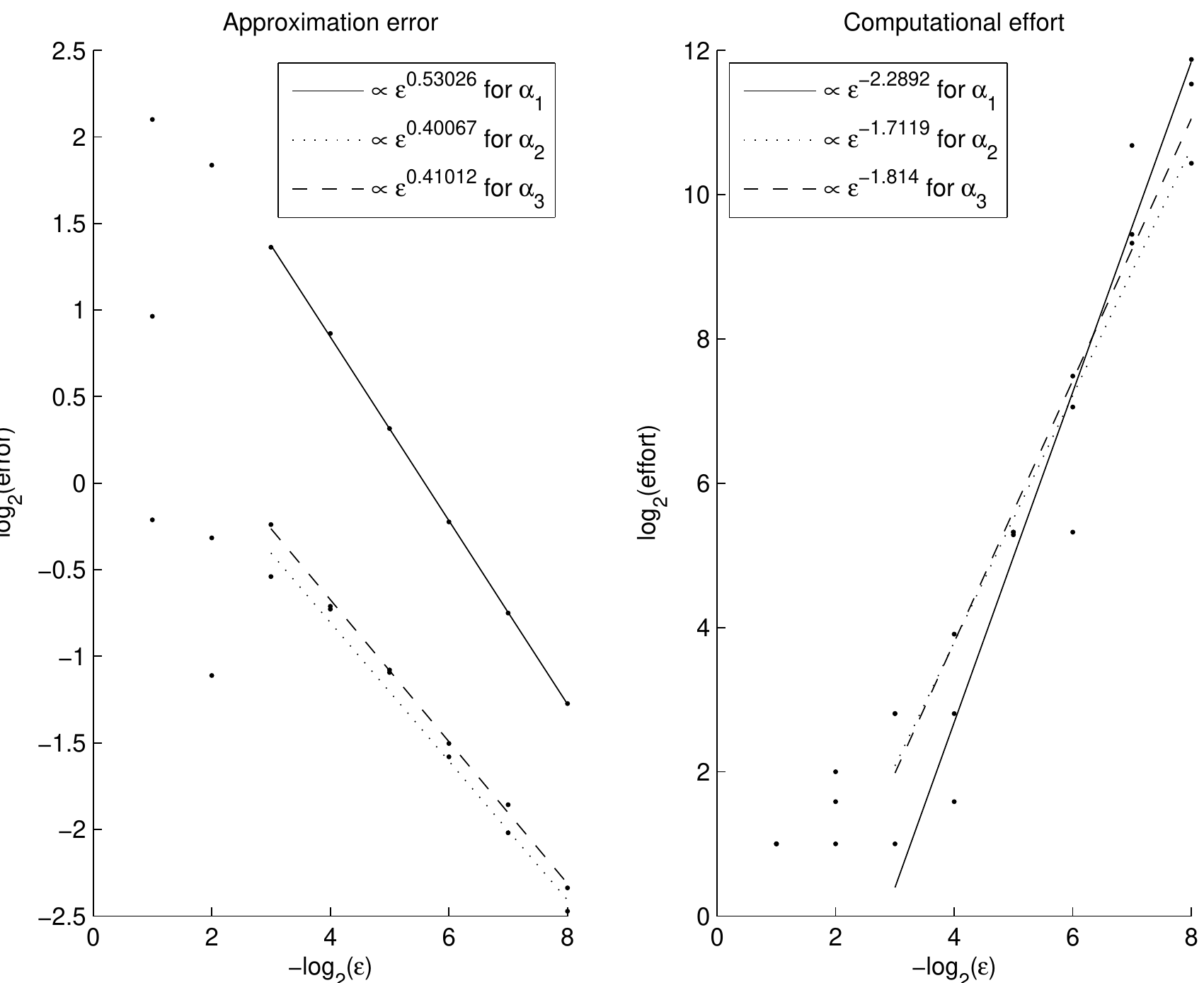}
\caption{\label{fig:exp-metric-log} Exponential metric, $n=2$:   $k=1,\ldots,8$. Left panel: The rate of the error proportional to   ${\e}^{m}$. Right panel: The effort grows approximately with the   rate $\e^{-2}$.}
\end{figure} 
We begin by presenting numerical statistics of the geodesic computation.  The plots in Figure~\ref{fig:exp-metric-log} show the computational error of the polygonal approximation compared to the explicit geodesic (measured in the $L^2$ norm) in the left panel and the computational effort, measured in the number of affirmative Birkhoff steps, in the right panel. 

The error decreases as $\e_k=2^{-k}$ decreases for $k=1,\ldots,8$.  In fact, the calculation shows the error is proportional to ${\e}^{m}$, where the exponent $m$ varies from 0.40 to 0.53, depending on the choice of $\al$ in the metric.

The effort increases as $\e$ decreases, and the simulation shows that the effort turns out to be (inversely) proportional to $1/{\e}^{m}$, where $m$ ranges from 1.71 to 2.2, as $\alpha$ is taken from $\al_1=0.65$, $\al_2=0.9$, $\al_3=1.1$.

Hence, for geodesics of larger curvature variation, the Birkhoff procedure starts closer but converges with smaller rate in $\e$, whereas the effort increases with largely uniform rate in $\e$ of about $1/\e^{2}$.

Figure~\ref{fig:exp-metric} shows how the polygons on the discrete level $\e_k$ approach the exact solution
\begin{equation*}
  \ga(x)=\(x,\frac1\al\ln\frac{\cos\al x}{\cos\al}\)
, \quad
x\in(-1,1)
.
\end{equation*}
We show the approximation for the three different values for $\alpha$ simultaneously in one graph. In each case, the solid line is the exact geodesic, the dotted line is the polygonal approximation for $e_3=1/8$, with the thicker dots indicating the stencil points. The intermediate polygon with densely distributed stencils is the approximation for $e_7=1/128$.

\begin{figure}
  \includegraphics[width=0.99\textwidth,viewport=55 200 545 600,clip=true]{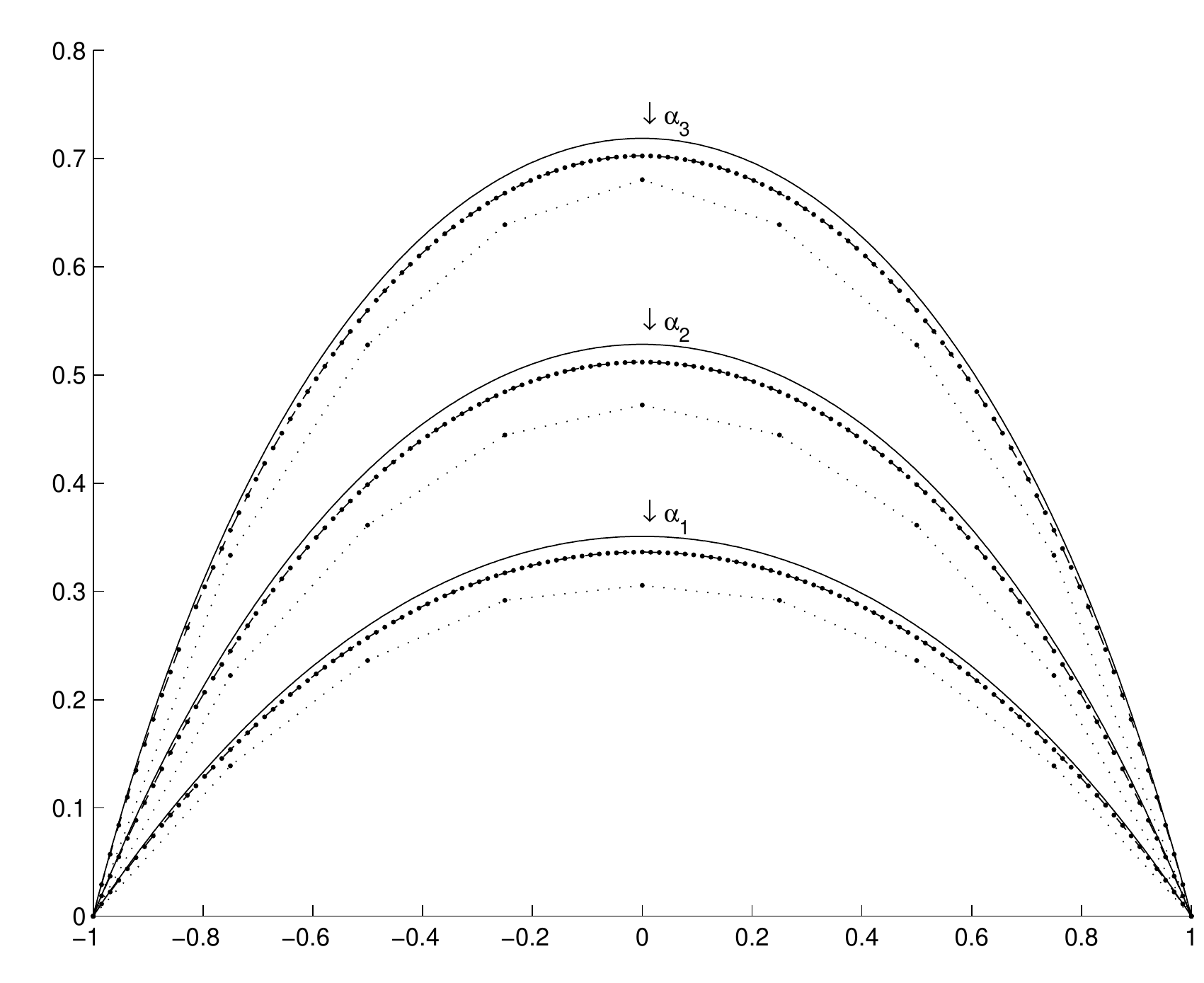}
\caption{\label{fig:exp-metric} Exponential metric, $n=2$: approximate   solutions compared to the analytic solution (solid line).}
        \end{figure}

\begin{acknowledgments}
  J.~Z.~gratefully acknowledges the financial support of the EPSRC   through an Advanced Research Fellowship (GR/S99037/1), grant   EP/K027743/1 and the Leverhulme Trust (RPG-2013-261). Both authors   are grateful to Michael Ortiz for pointing out the relevance of the   Jacobi principle. They benefited from helpful discussions during the   first annual meeting of the EPSRC network ``Mathematical Challenges   of Molecular Dynamics: A Chemo-Mathematical Forum'' (EP/F03685X/1)   and thank the anonymous reviewers for helpful comments.
\end{acknowledgments}

\def\cprime{$'$} \def\cprime{$'$} \def\cprime{$'$}   \def\polhk#1{\setbox0=\hbox{#1}{\ooalign{\hidewidth   \lower1.5ex\hbox{`}\hidewidth\crcr\unhbox0}}} \def\cprime{$'$}   \def\cprime{$'$} 

%% Syntax for ln: ln -s ~/research/biblio/johannes/johannes.bib %johannes.bib
%\bibliographystyle{plain}
%\bibliography{johannes,addendum}

\end{document}